\def\kn{\kern.1em}
\newcommand{\meet}{\wedge}
\newcommand{\join}{\vee}
\newcommand{\up}{\mathord{\uparrow}}
\newcommand{\m}{\mathbf}
\newcommand{\gbl}{GBL}
\newcommand{\var}{{\sf Var}}
\newcommand{\km}{\Vdash}
\newcommand{\mv}{[0,1]_{\textrm{MV}}}
\newcommand{\eq}{\approx}
\DeclarePairedDelimiter\floor{\lfloor}{\rfloor}
\newtheorem{theorem}{Theorem}[section]
\newtheorem{lemma}[theorem]{Lemma}
\newtheorem{proposition}[theorem]{Proposition}
\theoremstyle{definition}
\newtheorem{definition}[theorem]{Definition}
\newtheorem{remark}[theorem]{Remark}
\newcommand\blfootnote[1]{%
  \begingroup
  \renewcommand\thefootnote{}\footnote{#1}%
  \addtocounter{footnote}{-1}%
  \endgroup
}
\begin{document}

\setcounter{page}{1}     %%  Initial page number. Do not change. 

%%%%%%  FORMAT OF AUTHOR AND TITLE INFORMATION:
 
%%%%%%%%%%%%%%%%%%% The (default) case of one author
%%%%%%%%%%%%%%%%%%% and one or two lines of title
%\AuthorTitle{Wesley Fussner}{Poset products as relational models}
%%%%%%%%%%%%%%%%%%% The case of two authors
%%%%%%%%%%%%%%%%%%% and one line title
%\twoAuthorsTitleoneline {J.\ts Smith}{W.\ts A. Novak}{The One 
%Line Title}

%%%%%%%%%%%%%%%%%%% The case of two authors
%%%%%%%%%%%%%%%%%%% and at least two lines title
%\twoAuthorsTitle{ fs}{ sdfas}{The First 
%Line of The Title\linebreak 
%and the Second Line}

%%%%%%%%%%%%%%%%%%% The case of 
%%%%%%%%%%%%%%%%%%% three authors & one or two lines title 
%\threeAuthorsTitle{M.\ts Smith}{W.\ts A. Novak}{C. 
%\ts Johns}{The first 
%line of the title
%\linebreak and the second line}

%%%%%%%%%%%%%%%%%%% The case of 
%%%%%%%%%%%%%%%%%%% three authors & at least three lines title 
%\threeAuthorsTitlethreelines{M.\ts Smith}{W.\ts A. Novak}
%{C. \ts Johns}{The First Line of The Title\linebreak 
%The Second Line of The Title,\linebreak
%and The Third Line}

%%%%%%  INFORMATION FOR FOOTER OF FIRST PAGE
%%%%%%  will be inserted by the Editorial Office.

\renewcommand{\footnoterule}{\noindent\rule{5cm}{0.4pt}{\vspace{5pt}}}%
\renewcommand\thefootnote{\arabic{footnote}}%

\begin{center}
{\Large\bf Poset products as relational models}\\
\vspace{6pt}
Wesley Fussner\blfootnote{This project received funding from the European Research Council (ERC) under the European Union’s Horizon 2020 research and innovation program (grant agreement No. 670624).}\\
\vspace{12pt}
{\footnotesize Laboratoire J.A. Dieudonn\'e, CNRS, and Universit\'e C\^ote d'Azur, France}
\vspace{6pt}
\end{center}
   
%\PresentedReceived{}{}

\setcounter{footnote}{0}%

%%%%%%  ABSTRACT AND KEYWORDS  (obligatory)

%\begin{abstract}
{\setstretch{0.85}
\begin{displayquote}
{\footnotesize {\bf Abstract.} We introduce a relational semantics based on poset products, and provide sufficient conditions guaranteeing its soundness and completeness for various substructural logics. We also demonstrate that our relational semantics unifies and generalizes two semantics already appearing in the literature: Aguzzoli, Bianchi, and Marra's temporal flow semantics for H\'ajek's basic logic, and Lewis-Smith, Oliva, and Robinson's semantics for intuitionistic \L ukasiewicz logic. As a consequence of our general theory, we recover the soundness and completeness results of these prior studies in a uniform fashion, and extend them to infinitely-many other substructural logics.}
\end{displayquote}}
%\end{abstract}

\Keywords{poset products, GBL-algebras, BL-algebras, residuated lattices, many-valued logics, substructural logics, relational semantics, Kripke semantics.}

%%%%%  THE BODY OF THE PAPER

\section{Introduction} 
%  usual sectioning commands i.e.,
%  \section, \subsection, \section*, \subsection*, etc are 
%  allowed.

Kripke-style relational semantics has proven wildly successful in the study of intuitionistic logic and its extensions, and represents one of the dominant viewpoints on these logics. Substructural logics comprise a diverse family of non-classical logics generalizing intuitionistic logic, and insights from intuitionistic logic have inspired numerous fruitful lines of inquiry in the substructural setting. However, relational semantics has won comparatively few victories in the substructural case, especially in comparison to the prodigious success of algebraic semantics. For instance, the Routley-Meyer semantics \cite{RM1973}---probably the best known relational semantics for a properly non-intuitionistic substructural logic---is based on ternary accessibility relations rather than binary ones. These are demanding to work with on a technical level, and admit several competing conceptual interpretations (see \cite{M2020} for a survey). This extra layer of complexity has been an obstacle for the widespread application of the Routley-Meyer semantics and its cognates. An easily-applicable, conceptually-simple substructural relational semantics has so far eluded capture except in a few special cases (see, e.g., \cite{D1976,FG2019}). 

The purpose of this paper is to show how Jipsen and Montagna's poset product construction (see, e.g., \cite{JM2010}) may be interpreted as a relational semantics for certain propositional substructural logics, mainly those neighboring H\'ajek's basic fuzzy logic \cite{H1998}. Our discussion is motivated by \cite{ABM2009} and \cite{LOR2020}, which provide relational semantics for H\'ajek's basic logic and intuitionistic \L ukasiewicz logic ({\bf I\L L}), respectively. Basic logic is H\'ajek's effort to provide a minimal system encompassing fuzzy logics. On the other hand, {\bf I\L L} amounts to a natural deduction system for the logic of bounded commutative generalized basic logic algebras (see, e.g., \cite{JM2006}), a common generalization of basic logic and intuitionistic propositional logic (see \cite{GJ2009} for an algebraically-minded survey). At a glance, the methods of \cite{ABM2009} and \cite{LOR2020} appear quite different. Although motivated by algebraic work on poset products, \cite{LOR2020} only skirts algebraic methods. On the other hand, \cite{ABM2009} is inspired by temporal logic and engages heavily with algebraic methods, but does not connect with poset products (the theory of which was still nascent at the time of \cite{ABM2009}'s publication).

The present inquiry makes full use of the algebraic theory of poset products, and thereby unifies \cite{ABM2009} and \cite{LOR2020}. Our unified treatment is simpler than its antecedents, while also dramatically generalizing their results. In particular, we obtain general sufficient conditions (Lemma~\ref{thm:general condition}) that guarantee the soundness and completeness of our poset-product-based relational semantics in many situations. We recover the main soundness and completeness results of \cite{ABM2009,LOR2020} as an immediate consequence of this general theory, and generalize them to infinitely-many other substructural logics (see Theorem~\ref{thm:zoo}). Moreover, we provide some early results on the analogue of Kripke completeness.

Although our results represent a significant advancement from the existing literature, our methods are mundane. The main contributions of this paper consist of properly phrasing and contextualizing key ideas, definitions, and results, after which proofs become a straightforward application of known algebraic notions. Consequently, this article has an expository dimension. We thus make every effort to be as thorough as possible in our discussion of background material, and try to provide an especially explicit guide to the related literature.

The paper is structured as follows. Section \ref{sec:algebraic preliminaries} gives needed background on residuated lattices and algebraic logic. Section \ref{sec:poset products} provides a brief introduction to poset products, and synthesizes the literature on poset products for our present application. Section \ref{sec:frames} defines the relational models that we study. It also proves some general results regarding these, and compares this material to the literature. Section~\ref{sec:soundness and completeness} provides the main soundness and completeness results for our relational semantics. Finally, Section \ref{sec:afmp} provides some comments on when logics may be characterized by our relational models, and relates this to the substructural hierarchy.

\section{Algebraic preliminaries}\label{sec:algebraic preliminaries}

This preliminary section reviews the most important algebraic background. We assume familiarity with elementary lattice theory and universal algebra, and refer the reader to \cite{BS1981} for more information. 

\subsection{Residuated lattices.}\label{sec:residuated lattices} An algebra ${\m A} = (A,\meet,\join,\cdot,\to,0,1)$ is a \emph{bounded commutative integral residuated lattice} provided that:
\begin{enumerate}[\normalfont (1)]
\item $(A,\meet,\join,0,1)$ is a bounded lattice.
\item $(A,\cdot,1)$ is a commutative monoid.
\item For all $x,y,z\in A$,
$$x\cdot y\leq z\iff x\leq y\to z.$$
\end{enumerate}
We call the above-defined structures \emph{residuated lattices} for brevity.\footnote{Readers familiar with residuated structures will notice that many of our comments and results apply beyond the bounded, commutative, and integral case, but we opt to work in this setting for clarity and simplicity.} Residuated lattices are the subject of a large and rapidly-expanding literature, fueled to a great extent by their ties to substructural logic (see Section \ref{sec:algebraic substructural logic}). We invite the reader to consult the standard monograph \cite{GJKO2007} for an extensive treatment of residuated lattices and their relevance to substructural logic. Note that some sources denote multiplication $\cdot$ in residuated lattices by $\odot$ or $\otimes$. Following usual algebraic conventions, we abbreviate $x\cdot y$ by $xy$.

It is well-known (see, e.g, \cite{BC2014}) that item (3) in the definition of residuated lattices may be replaced by the following four equations:\footnote{We will use $\eq$ to denote formal equality.}
\begin{align*}
&x(y\join z) \eq xy\join xz,\\
&x\to (y\meet z) \eq (x\to y)\meet (x\to z),\\
&(x(x\to y))\join y \eq y,\\
&(x\to (xy))\meet y \eq y.
\end{align*}
The class of residuated lattices is therefore a variety (i.e., a class of algebras defined by equations, or equivalently, closed under taking homomorphic images, subalgebras, and direct products).

A residuated lattice is called a \emph{bounded commutative generalized basic logic algebra} if it satisfies the condition
$$(\forall x,y)(x\leq y \implies \exists z (yz\eq x)),$$
often referred to as \emph{divisibility}. One may show that this is equivalent to the identity
$$x(x\to y) \eq x\meet y.$$
Bounded commutative generalized basic logic algebras are sometimes called GBL$_{\sf ewf}$-algebras, where the subscripts refer to the {\bf e}xchange and {\bf w}eakening rules of structural proof theory, and the presence of a falsity constant $f$. These logical properties are algebraized by commutativity, integrality (i.e., the demand that the monoid identity is the greatest element), and boundedness. Since we always assume these properties in this note, we simply call bounded commutative generalized basic logic algebras \emph{GBL-algebras}.

GBL-algebras have been studied from both algebraic and logical viewpoints (see \cite{GJ2009}). Most relevantly, GBL-algebras are motivated as a generalization of BL-algebras (defined below), which provide an algebraic semantics for H\'ajek's basic fuzzy logic \cite{H1998}. Since they may be defined relative to residuated lattices by an equation, GBL-algebras form a variety. Importantly, GBL-algebras have distributive lattice reducts \cite[Lemma 2.9]{GT2005}.

This study focuses mostly on GBL-algebras, and will apply the following definitions only in that context. However, we state the definitions in more generality.
\begin{definition}
Let $\m A$ be a residuated lattice and $\sf V$ be a variety of residuated lattices.
\begin{enumerate}[\normalfont (1)]
\item Given a positive integer $k$, we say that $\m A$ is \emph{$k$-potent} if $\m A$ satisfies the identity $x^{k+1}\eq x^k$.
\item We say that $\m A$ is \emph{almost finite} if there exists a positive integer $k$ such that $\m A$ is $k$-potent.
\item We say that ${\sf V}$ has the \emph{finite model property} (or \emph{FMP}) if $\sf{V}$ is generated by its finite members.
\item We say that ${\sf V}$ has the \emph{almost finite model property} (or \emph{AFMP}) if $\sf V$ is generated by its almost finite members.
\end{enumerate}
\end{definition}
The following lemma explains the choice of the terminology ``almost finite.'' It is part of the folklore of residuated lattices, and we omit its easy proof (but caution that integrality is indispensable).
\begin{lemma}\label{lem:almost finite}
Let $\m A$ be a finite residuated lattice. Then there exists a positive integer $k$ such that $\m A$ satisfies $x^{k+1}\eq x^k$. Stated differently, every finite residuated lattice is almost finite. Moreover, every variety of residuated lattices with the FMP has the AFMP.
\end{lemma}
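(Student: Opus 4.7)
The plan is to exploit integrality to show that for a fixed $x$, the sequence of powers $x,x^2,x^3,\dots$ is weakly descending, and then use finiteness to conclude that it stabilizes uniformly.

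First I would observe that in any (bounded commutative integral) residuated lattice one has $x\leq 1$ for every $x$, and that multiplication is monotone in each argument (a standard consequence of residuation: from $y\leq z$ and $xz\leq xz$ one gets $x\leq z\to xz$, hence $xy\leq xz$). Applying this to $x\leq 1$ yields $x^{n+1}=x\cdot x^n\leq 1\cdot x^n=x^n$ for every $n\geq 1$. Hence, for each fixed $a\in A$, the sequence $a\geq a^2\geq a^3\geq\cdots$ is a descending chain in the finite set $A$, so there is a least integer $k_a\geq 1$ with $a^{k_a+1}=a^{k_a}$. A short induction shows that the sequence then remains constant thereafter: if $a^{n+1}=a^n$, then $a^{n+2}=a\cdot a^{n+1}=a\cdot a^n=a^{n+1}$.

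Next I would set $k:=\max\{k_a:a\in A\}$, which exists since $A$ is finite. For every $a\in A$ we have $k\geq k_a$, so by the stabilization observation $a^{k+1}=a^k$. Since $x^{k+1}\eq x^k$ is an identity in one variable, this is exactly what is required for $\m A$ to be $k$-potent, proving the first assertion.

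For the ``moreover'' clause, suppose ${\sf V}$ has the FMP, so ${\sf V}$ is generated (as a variety) by its finite members. By the first part of the lemma every such finite member is almost finite, hence ${\sf V}$ is generated by almost finite algebras, which is the definition of the AFMP. The only delicate point in the whole argument is the use of integrality to force the power sequence to descend; without it one would only obtain eventual periodicity $x^{k+m}=x^k$ rather than $x^{k+1}=x^k$, which is the subtlety the paper flags by warning that ``integrality is indispensable,'' and it is the sole step I would take care to justify explicitly.
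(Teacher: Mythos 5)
Your proof is correct and is precisely the folklore argument the paper alludes to when it declines to give one (with the remark that integrality is indispensable): integrality makes the power sequence $x\geq x^2\geq x^3\geq\cdots$ descend, finiteness makes it stabilize, and taking the maximum stabilization index over the finite universe yields a uniform $k$, after which the FMP-to-AFMP claim is immediate since finite algebras are among the almost finite ones. The only cosmetic slip is in your parenthetical justification of monotonicity: the residuation step should read $z\leq x\to xz$ (so that $y\leq z$ gives $y\leq x\to xz$ and hence $xy\leq xz$), rather than $x\leq z\to xz$.
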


Note that there exist varieties of GBL-algebras with the AFMP but lacking the FMP.\footnote{In fact, there are uncountably-many undecidable superintuitionistic logics by a simple counting argument (see, e.g., \cite[Chapter 16]{CZ1997}). Each of these corresponds to a variety of Heyting algebras without the FMP, all of which are varieties of GBL-algebras with the AFMP.} On the other hand, the variety generated by Chang's MV-algebra is an example of a subvariety of GBL-algebras lacking the AFMP entirely.\footnote{This essentially follows from \cite[Proposition 5]{BDiNG2007}.}

Several subvarieties of GBL-algebras are especially notable:
\begin{itemize}
\item A \emph{Heyting algebra} is a $1$-potent \gbl-algebra.
\item A \emph{BL-algebra} is a \gbl-algebra that satisfies $(x\to y)\join (y\to x) \eq 1$.
\item A \emph{G\"odel algebra} is a Heyting algebra that satisfies $(x\to y)\join (y\to x) \eq 1$.
\item An \emph{MV-algebra} is a BL-algebra that satisfies $x \eq (x\to 0) \to 0$.
\item A \emph{Boolean algebra} is a Heyting algebra that $x \eq (x\to 0) \to 0$.
\end{itemize}
We denote the varieties of GBL-algebras, Heyting algebras, BL-algebras, G\"odel algebras, and MV-algebras by ${\sf GBL}$, $\sf HA$, ${\sf BL}$, $\sf GA$, and $\sf MV$, respectively. If $k$ is a positive integer and ${\sf V}$ is a variety of residuated lattices, we denote by ${\sf V}^k$ the subvariety of $k$-potent members of ${\sf V}$. If $\sf V$ is a subvariety of $\sf GBL$, then ${\sf V}^1$ is a variety of Heyting algebras; in particular, ${\sf GBL}^1 = {\sf HA}$, ${\sf BL}^1={\sf GA}$, and ${\sf MV}^1$ is the variety of Boolean algebras.

Note that each of ${\sf GBL}$, ${\sf HA}$, ${\sf GA}$, ${\sf BL}$, and ${\sf MV}$ has the FMP (the result is well-known for ${\sf HA}$; for ${\sf GBL}$, see \cite[Section 5]{JM2009}; for ${\sf BL}$, ${\sf MV}$, and ${\sf GA}$, see e.g. \cite{CHN2014}). Moreover, note that a subdirectly irreducible distributive residuated lattice satisfies $(x\to y)\join (y\to x)\eq 1$ if and only if it is totally-ordered \cite{BT2003}, whence each of $\sf MV$, $\sf BL$, and $\sf GA$ is generated by its totally-ordered members.

We will often refer to some special MV-algebras in the sequel. We denote by $\mv$ the standard MV-algebra chain, i.e., the MV-algebra $([0,1],\meet,\join,\cdot,\to,0,1)$ with operations defined by
\begin{align*}
x\meet y &= \min\{x,y\}\\
x\join y &= \max\{x,y\}\\
x\cdot y &= \max\{0, x+y-1\}\\
x\to y &= \min\{1,1-x+y)\},
\end{align*}
where $\max$, $\min$, $+$, and $-$ are as usual in the real unit interval $[0,1]$. For each positive integer $k\geq 2$, we denote by ${\textbf{\L}}_k$ the subalgebra of $\mv$ whose underlying universe is
$$\L_k = \Bigg\{0,\frac{1}{k-1},\dots,\frac{k-2}{k-1},1\Bigg\}.$$
Of course, ${\textbf{\L}}_2$ is isomorphic to the two-element Boolean algebra, and we sometimes denote it by $\m 2$ as is customary. We further adopt the convention that $\textbf{\L}_0=\mv$ and $\textbf{\L}_1$ is the one-element MV-algebra. Note that the set $\{{\textbf{\L}}_k : k\geq 2\}$  generates the variety $\sf MV$ (see \cite[Proposition 8.1.2]{CDM2000}).

The \emph{conuclear image} construction---which we now outline---is central to the theory of residuated lattices generally, and this work in particular. If $\m A$ is a residuated lattice, a \emph{conucleus} on ${\m A}$ is a function $\sigma\colon A\to A$ such that for all $x,y\in A$,
\begin{align*}
\sigma (x)&\leq x,\\
\sigma(\sigma (x)) &= \sigma(x),\\
x\leq y\implies& \sigma (x)\leq \sigma (y),\\
\sigma (x)\sigma (y)&\leq \sigma (xy),\\
\sigma(1)\sigma(x) = \sigma&(x)\sigma(1)=\sigma(x).
\end{align*}
If $\m A$ is a residuated lattice and $\sigma$ is a conucleus on ${\m A}$, then the $\sigma$-image of ${\m A}$,
$$A_\sigma = \sigma[A] = \{x\in A : \sigma(x)=x\},$$
may be endowed with the structure of a residuated lattice by defining $\meet_\sigma$ and $\to_\sigma$ on $A_\sigma$ by
$$x\meet_\sigma y = \sigma (x\meet y),$$
$$x\to_\sigma y = \sigma (x\to y).$$
We denote the resulting residuated lattice $(A_\sigma,\meet_\sigma,\join,\cdot,\to_\sigma,0,\sigma(1))$ by ${\m A}_\sigma$. 

\subsection{Filters and values.} If $\m A$ is a residuated lattice, a \emph{deductive filter} of $\m A$ is a nonempty subset $F\subseteq A$ such that:
\begin{enumerate}[\normalfont (1)]
\item $F$ is an up-set (i.e., $x\leq y$ and $x\in F$ implies $y\in F$).
\item If $x,y\in F$, then $xy\in F$.
\end{enumerate}
In particular, since $xy\leq x\meet y$ holds in every residuated lattice, the deductive filters of $\m A$ are lattice filters of (the lattice reduct of) ${\m A}$. The deductive filters of $\m A$ correspond bijectively to the congruences of $\m A$ by the map
$$F\mapsto \{(x,y)\in A\times A : (x\to y)\meet (y\to x)\in F\}.$$
If $F$ is a deductive filter of ${\m A}$, we denote the quotient of ${\m A}$ by the corresponding congruence by ${\m A}/F$.

An algebra is called \emph{subdirectly irreducible} if and only if it has a least congruence above the diagonal congruence. The above correspondence entails that a residuated lattice ${\m A}$ is subdirectly irreducible if and only if it has a least nontrivial deductive filter. If additionally ${\m A}$ is almost finite, then ${\m A}$ is subdirectly irreducible if and only if there is a greatest element of ${\m A}$ strictly smaller than $1$ by \cite[p. 202]{GJKO2007}.

A deductive filter $F$ of $\m A$ is called \emph{prime} if whenever $x,y\in A$ and $x\join y\in F$, we have $x\in F$ or $y\in F$. A \emph{value} of $\m A$ is a deductive filter $F$ so that there exists $x\in A$ such that $F$ is maximal among the deductive filters not containing $x$. Values are prime filters, and the quotient of a residuated lattice by one of its values is subdirectly irreducible (see, e.g., \cite[Lemma 5.26 and 5.27]{GJKO2007}). We will denote the set of values of a residuated lattice ${\m A}$ by $\Delta({\m A})$.

\subsection{Algebraic substructural logic.}\label{sec:algebraic substructural logic}
The idea animating algebraic logic is to view \emph{formulas} of a logic as \emph{terms} in a corresponding algebraic language. In well-behaved cases, theoremhood in the logic can be described in reference to the validity of certain equations involving those terms, and logical consequence becomes a matter of \emph{equational} consequence. Our interest in residuated algebraic structures arises because they provide the equivalent algebraic semantics \cite{BP1989} of substructural logics, in particular axiomatic extensions of the Full Lambek calculus (see, e.g., \cite{GO2006,GJKO2007}). Residuated lattices (in the sense of Section \ref{sec:residuated lattices}) give the equivalent algebraic semantics of the Full Lambek calculus with exchange, weakening, and extended by the axiom $0\vdash \varphi$. Moreover, the lattice of axiomatic extensions of the aforementioned logic is dually isomorphic to the lattice of subvarieties of residuated lattices. In particular, {\sf BL} algebraizes H\'ajek's basic logic, {\sf MV} algebraizes \L ukasiewicz logic, {\sf HA} algebraizes intuitionistic logic, and $\sf GA$ algebraizes G\"odel-Dummett logic. The logic corresponding to {\sf GBL} is most often discussed algebraically, but {\sf GBL} also provides an algebraic semantics for {\bf I\L L} \cite{LOR2020}.

In order to express the needed aspects of this algebra-logic correspondence, we fix a set of propositional variable symbols $\var$. Like every algebraic language, the language $\mathcal{L}=\{\meet,\join,\cdot,\to,0,1\}$ of residuated lattices has a term algebra over the variables ${\sf Var}$, which we denote by ${\bf Tm}$. The algebra ${\bf Tm}$ is freely-generated by the set $\var$. If ${\m A}$ is a residuated lattice, an \emph{algebraic assignment in ${\m A}$} is a function $h\colon {\sf Var}\to {\m A}$. By the free property of the term algebra, every algebraic assignment $h$ in ${\m A}$ uniquely extends to a homomorphism $\hat{h}\colon{\bf Tm}\to{\m A}$, which may be defined recursively by setting $\hat{h}(p)=h(p)$ for all $p\in\var$, $\hat{h}(0)=0$ and $\hat{h}(1)=1$, and $\hat{h}(t_1\star t_2) = \hat{h}(t_1)\star\hat{h}(t_2)$ for $\star\in\{\meet,\join,\cdot,\to\}$ (see, e.g., \cite[Chapter II, \S 10]{BS1981} for a standard treatment).

If $\varphi,\psi\in {\bf Tm}$, the inequality $\varphi\leq\psi$ is \emph{valid} in ${\m A}$ if $\hat{h}(\varphi)\leq \hat{h}(\psi)$ for every algebraic assignment $h\colon {\sf Var}\to {\m A}$. Now suppose $L$ is an extention of the Full Lambek calculus that is algebraized by a variety $\sf V$ of residuated lattices. Viewing $\psi_1,\ldots,\psi_n,\varphi\in Tm$ as formulas of $L$, the sequent $\psi_1,\ldots,\psi_n\vdash \varphi$ is provable in $L$  if and only if for each ${\m A}\in {\sf V}$ and each algebraic assignment $h$ into ${\m A}$,
$$\hat{h}(\psi_1)=1\;\&\; \hat{h}(\psi_2)=1\;\&\;\ldots\;\&\; \hat{h}(\psi_n)=1 \Rightarrow \hat{h}(\varphi)=1.$$
Thanks to the local deduction theorem \cite{GO2006}, the sequent $\psi_1,\ldots,\psi_n\vdash \varphi$ is provable in $L$ if and only if there exists a non-negative integer $k$ such that $\psi_1^k\cdot\psi_2^k\ldots\cdot\psi_n^k\to \varphi \eq 1$ is valid in every ${\m A}\in {\sf V}$.\footnote{A consequence of this is that the validity of equations in the algebraic semantics determine the validity of sequents in the logic.}

In the sequel, our primary focus is on logics algebraized by subvarieties of $\sf GBL$. This family of logics was originally introduced via their algebraic counterparts, viz. GBL-algebras, and the latter remain the most natural environment for their study. Due to this fact, as well as the close relationship between substructural logics and their equivalent algebraic semantics, we will not belabor the distinction between axiomatic extensions of the Full Lambek calculus and the varieties algebraizing them. We will adopt an algebraic mindset here, and will work primarily with varieties of residuated lattices rather than directly with the logics they algebraize.

\section{Poset products}\label{sec:poset products}
\subsection{The poset product construction.} Poset products of residuated lattices were introduced in \cite{JM2009} under the name \emph{poset sums}, and were further developed in \cite{J2009,JM2010,BG2018,BM2011} in order to give embedding and representation theorems for various classes of GBL-algebras. We recall pertinent details of the poset product construction. Our discussion essentially synthesizes information contained in \cite{J2009}, \cite{JM2009}, and \cite{JM2010}, and specializes this to the case of bounded, commutative, and integral algebras.

Suppose that $(X,\leq)$ is a poset and $\{{\m A}_x : x\in X\}$ is an indexed collection of residuated lattices sharing a common least element $0$ and common greatest element $1$. We set
$${\m B}=\prod_{x\in X} {\m A}_x$$
and define a map $\Box\colon B\to B$ by
\[ \Box(f)(x) = \begin{cases}
      f(x) & \text{ if }f(y)=1\text{ for all }y>x \\
      0 & \text{ if there exists }y>x\text{ with } f(y)\neq 1.
   \end{cases}
\]
The map $\Box$ is a conucleus on ${\m B}$ by \cite[Lemma 9.4]{JM2010}.\footnote{As recalled in \cite{JM2010}, every Heyting algebra is the image of a Boolean algebra under a conucleus, i.e., the modal necessity operator of the corresponding S4 modal algebra. We use the symbol $\Box$ because our conucleus generalizes this situation.} The \emph{poset product} of the indexed family $\{{\m A}_x : x\in X\}$ is the algebra ${\m B}_\Box$, which we will sometimes denote
$$\prod_{(X,\leq)} {\m A}_x.$$
Observe that we may distinguish a poset product from a direct product by the index, which is the poset $(X,\leq)$ rather than the set $X$. If $(X,\leq)$ is a poset and $\{{\m A}_x : x\in X\}$ is an indexed family of residuated lattices, the \emph{dual poset product} of the indexed family is the poset product
$$\prod_{(X,\geq)} {\m A}_x$$
with respect to the order dual $(X,\geq)$ of the poset $(X,\leq)$.

Note that in the poset product ${\m B}_{\Box}$, meets, joins, and products are computed pointwise (see, e.g., \cite[Lemma 2]{J2009}). Since $\Box 0 = 0$ and $\Box 1 =1$ as well, we obtain the following.

\begin{lemma}\label{lem:product subalgebra}
The poset product ${\m B}_\Box$ is a $\{\meet,\join,\cdot,0,1\}$-subalgebra of the direct product ${\m B}$, and hence satisfies every equation in the language $\{\meet,\join,\cdot,0,1\}$ that is satisfied in ${\m B}$.
\end{lemma}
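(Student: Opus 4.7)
The statement has two assertions: (a) $\m{B}_\Box$ is a $\{\meet,\join,\cdot,0,1\}$-subalgebra of $\m{B}$, and (b) every equation in that language satisfied by $\m{B}$ is satisfied by $\m{B}_\Box$. Part (b) is a standard consequence of (a) via universal algebra (see \cite{BS1981}), so the plan is to concentrate on (a). The universe of $\m{B}_\Box$ is $B_\Box=\{f\in B : \Box f=f\}$, and by the definition of conuclear image the operations $\join$, $\cdot$, and the constants $0,1$ of $\m{B}_\Box$ are inherited from $\m{B}$, while the meet in $\m{B}_\Box$ is a priori $f\meet_\Box g=\Box(f\meet g)$. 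Hence (a) amounts to checking that $B_\Box$ is closed under the pointwise operations $\meet,\join,\cdot$ of $\m{B}$ and contains $0$ and $1$; closure under $\meet$ additionally ensures that $\meet_\Box$ coincides with the restriction of the pointwise meet.

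The key step is a uniform case analysis. Fix $f,g\in B_\Box$ and $x\in X$, and let $\star\in\{\meet,\join,\cdot\}$. If $(f\star g)(y)=1$ for every $y>x$, then the very definition of $\Box$ gives $\Box(f\star g)(x)=(f\star g)(x)$, so $f\star g$ is fixed at $x$. Otherwise, pick $y>x$ with $(f\star g)(y)\neq 1$. I would then argue, case by case, that at least one of $f(y),g(y)$ differs from $1$: for $\meet$ this is immediate from $a\meet b=1\iff a=b=1$; for $\join$, from $(f\join g)(y)\neq 1$ one infers both $f(y)\neq 1$ and $g(y)\neq 1$; and for $\cdot$, I would invoke integrality of $\m{A}_y$, which forces $ab=1\Longrightarrow a=b=1$. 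In each case, since $f$ and $g$ lie in $B_\Box$, whichever coordinate fails to be $1$ at $y$ must take value $0$ at $x$, so $(f\star g)(x)=0=\Box(f\star g)(x)$. This establishes $\Box(f\star g)=f\star g$ for each of the three binary operations. Finally, $\Box 0=0$ and $\Box 1=1$ were already recorded in the text, so $0,1\in B_\Box$.

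With closure in the $\{\meet,\join,\cdot,0,1\}$-fragment established, $\m{B}_\Box$ is by definition a subalgebra of $\m{B}$ in that reduced language, and (b) follows by observing that subalgebras validate every equation of the enveloping algebra (in the appropriate language), since homomorphic extensions of assignments into the subalgebra factor through the inclusion into $\m{B}$.

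The only place where real care is needed is the product case, where integrality of each $\m{A}_x$ is indispensable; without it, $(fg)(y)\neq 1$ would not propagate to $f(y)\neq 1$ or $g(y)\neq 1$ and the argument would collapse. Everything else is bookkeeping, and the lemma is genuinely a direct unpacking of the definition of $\Box$ together with the fact, already cited from \cite[Lemma 2]{J2009}, that meets, joins, and products in the poset product are computed pointwise.
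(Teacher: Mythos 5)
Your proof is correct and takes essentially the same route as the paper, which simply invokes the known fact (\cite[Lemma 2]{J2009}) that meets, joins, and products in ${\m B}_\Box$ are computed pointwise together with $\Box 0=0$ and $\Box 1=1$; you merely supply the direct case analysis behind that closure fact. One small correction to your closing remark: the implication $(fg)(y)\neq 1\Rightarrow f(y)\neq 1$ or $g(y)\neq 1$ is just the contrapositive of $1\cdot 1=1$ and needs no integrality, whereas integrality is what makes the join and meet cases work (via $1\join a=1$ and $a\meet b=1\Rightarrow a=b=1$), and $0\cdot a=0$ follows from boundedness and residuation.
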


The following fact is useful for working with embeddings into poset products.

\begin{lemma}\label{lem:subalgebras of poset products}
Let $(X,\leq)$ be a poset, and let ${\m A}_x$, ${\m B}_x$ be residuated lattices for each $x\in X$. Suppose further that ${\m A}_x$ is a subalgebra of ${\m B}_x$ for each $x\in X$. Then $\prod_{(X,\leq)} {\m A}_x$ is a subalgebra $\prod_{(X,\leq)} {\m B}_x$.
\end{lemma}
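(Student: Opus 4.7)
The plan is to unpack the definitions of both poset products and verify, in order, that the universe of $\prod_{(X,\leq)} {\m A}_x$ embeds into the universe of $\prod_{(X,\leq)} {\m B}_x$ and that every operation of the former is the restriction of the corresponding operation of the latter.

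First I would observe that because each ${\m A}_x$ is a subalgebra of ${\m B}_x$, the two algebras share the constants $0$ and $1$; hence the hypothesis of the poset product construction (a common least and greatest element across the family) is satisfied on both sides and the constants coincide. Consequently, every function $f\in\prod_{x\in X} A_x$ lies in $\prod_{x\in X} B_x$, so the direct product $\prod_{x\in X}{\m A}_x$ is a $\{\meet,\join,\cdot,\to,0,1\}$-subalgebra of $\prod_{x\in X}{\m B}_x$ by the usual argument for direct products of subalgebras.

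Next I would check that the conucleus $\Box$ used in forming ${\m A}_x$-poset products coincides with the one used for ${\m B}_x$-poset products on any $f\in\prod_{x\in X}A_x$. This is immediate from the definition of $\Box$: its value at $x$ is either $f(x)$ or $0$, depending only on whether $f(y)=1$ for all $y>x$, and this test is formulated using the shared constants $0$ and $1$. Therefore $f\in\prod_{(X,\leq)}A_x$ (i.e. $\Box^{\m A}(f)=f$) iff $f\in\prod_{(X,\leq)}B_x$ when restricted to $A$-valued functions, giving the desired inclusion of universes.

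Finally I would confirm that the operations of $\prod_{(X,\leq)}{\m A}_x$ are the restrictions of those of $\prod_{(X,\leq)}{\m B}_x$. For $\join$, $\cdot$, and the constants $0,1$, this is Lemma~\ref{lem:product subalgebra} together with the fact that these are computed pointwise and each ${\m A}_x\subseteq{\m B}_x$ is closed under them. For $\meet_\Box$ and $\to_\Box$, which are defined as $\Box$ applied to the pointwise meet and residuum, the same pointwise/subalgebra consideration (plus the preceding paragraph's observation that $\Box$ is the same map on $A$-valued functions) yields agreement. The only step requiring any vigilance is the coincidence of $\Box$ across the two families; once one notes that its definition refers only to the shared $0,1$, everything else is mechanical.
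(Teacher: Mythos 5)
Your proposal is correct and follows essentially the same route as the paper's proof: observe that $\prod_{x\in X}{\m A}_x$ is a subalgebra of $\prod_{x\in X}{\m B}_x$, note that $\Box$ acts identically on $A$-valued functions (so the two poset-product universes and conuclear operations agree on restriction), and conclude. You simply spell out in detail the step the paper dismisses as obvious, namely that $\Box$ depends only on the shared constants $0$ and $1$.
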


\begin{proof}
Note that ${\m A}=\prod_{x\in X} {\m A}_x$ is a subalgebra of ${\m B}=\prod_{x\in X} {\m B}_x$ since ${\m A}_x$ is a subalgebra of ${\m B}_x$ for each $x\in X$. Moreover, $\Box$ is a conucleus on each of ${\m A}$ and ${\m B}$, and it is obvious that the operations of ${\m A}_\Box$ are the restrictions of those on ${\m B}_\Box$. The result follows.
\end{proof}

The next lemma provides several alternative descriptions of the elements of ${\m B}_\Box$.

\begin{lemma}\label{lem:aclabeling}
Let $f\in {\m B}$. The following are equivalent.
\begin{enumerate}[\normalfont (1)]
\item $f\in {\m B}_\Box$.
\item $\Box f = f$.
\item For all $x,y\in X$ with $x<y$, $f(x)=0$ or $f(y)=1$.
\item $S_f=\{x\in X : f(x)\notin\{0,1\}\}$ is a (possibly empty) antichain of $(X,\leq)$, $L_f=f^{-1}(0)$ is a down-set of $(X,\leq)$, and $U_f=f^{-1}(1)$ is an up-set of $(X,\leq)$.
\end{enumerate}
\end{lemma}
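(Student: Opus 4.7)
The plan is to prove the chain $(1) \iff (2) \iff (3) \iff (4)$, with the first equivalence being essentially definitional, the second arising from unpacking the case-split in the definition of $\Box$, and the third following from a short case analysis.

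For $(1) \iff (2)$, I would invoke the general fact about conucleus images: since $\Box$ is a conucleus on $\m B$ (cited from \cite[Lemma 9.4]{JM2010}), it is idempotent, and so the image $\Box[B]$ coincides with its set of fixed points. Since $\m B_\Box$ is defined to be $\Box[B]$, this gives $f \in \m B_\Box$ iff $\Box f = f$.

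For $(2) \iff (3)$, I would evaluate the condition $\Box f = f$ pointwise. If there exists $y > x$ with $f(y) \neq 1$, then $\Box(f)(x) = 0$, so $\Box f = f$ forces $f(x) = 0$; if instead $f(y) = 1$ for all $y > x$, the definition gives $\Box(f)(x) = f(x)$ automatically. Thus $\Box f = f$ is equivalent to saying that whenever some $y > x$ has $f(y) \neq 1$, one has $f(x) = 0$, which rephrases as: for all $x < y$, either $f(x) = 0$ or $f(y) = 1$. This is exactly condition (3).

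For $(3) \Rightarrow (4)$, I would argue contrapositively in each of the three required properties. If $x, y \in S_f$ with $x < y$, then neither $f(x) = 0$ nor $f(y) = 1$, contradicting (3), so $S_f$ is an antichain. If $x \in L_f$ and $z < x$, then applying (3) to $z < x$ with $f(x) = 0 \neq 1$ yields $f(z) = 0$; and dually, if $x \in U_f$ and $z > x$, applying (3) to $x < z$ with $f(x) = 1 \neq 0$ yields $f(z) = 1$. For $(4) \Rightarrow (3)$, I would take $x < y$ and split on the value $f(x)$: if $f(x) = 1$ then $x \in U_f$ forces $y \in U_f$, i.e., $f(y) = 1$; if $f(x) \in (0,1)$ then $x \in S_f$, so $y \notin S_f$ (antichain) and $y \notin L_f$ (since $x \leq y$ would force $x \in L_f$, contrary to $f(x) \neq 0$), hence $f(y) = 1$; the remaining case $f(x) = 0$ already delivers the conclusion.

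I do not anticipate any serious obstacle here; each step is a short formal manipulation. If anything, the most delicate bookkeeping is in the $(4) \Rightarrow (3)$ case split, where one must remember that $L_f$ being a down-set means $y \in L_f$ and $x \leq y$ implies $x \in L_f$ (rather than the reverse), so that the right hypothesis is used in the right direction.
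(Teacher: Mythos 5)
Your proof is correct and matches the substance of the paper's argument: the paper simply defers $(1)\iff(2)$ to the general theory of conuclei, $(3)\iff(4)$ to a cited lemma of Jipsen, and the remaining link to a cited lemma of Jipsen--Montagna, and your direct verifications are exactly the content of those citations. The only cosmetic point is the interval notation $f(x)\in(0,1)$ in your final case split, which should read $f(x)\notin\{0,1\}$, since the factors ${\m A}_x$ are arbitrary residuated lattices and need not be real-valued.
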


\begin{proof}
(1) and (2) are equivalent by the general theory of conuclei. (3) and (4) are equivalent by \cite[Lemma 1]{J2009}. The rest follows from \cite[Lemma 9.4]{JM2010}.
\end{proof}
Owing to item (4), the choice functions satisfying the equivalent conditions of Lemma \ref{lem:aclabeling} are called \emph{antichain labelings}, or \emph{ac-labelings} for short.

The subsets $S_f$, $L_f$, and $U_f$ identified in item (4) are quite useful for thinking about poset products. Observe that for any ac-labeling $f$, the three sets $L_f$, $S_f$, and $U_f$ form a partition of the index set $X$.

\begin{lemma}\label{lem:comparability}
Let $f,g\in {\m B}_{\Box}$ be ac-labelings. The following are equivalent.
\begin{enumerate}[\normalfont (1)]
\item $f\leq g$.
\item $L_g\subseteq L_f$, $U_f\subseteq U_g$, and $f(x)\leq g(x)$ for all $x\in S_f\cap S_g$.
\end{enumerate}
\end{lemma}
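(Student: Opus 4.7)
The plan is to unpack the pointwise nature of the order on $\prod_{(X,\leq)} {\m A}_x$ and then proceed by a short case analysis. By Lemma~\ref{lem:product subalgebra}, ${\m B}_\Box$ is a $\{\meet,\join,\cdot,0,1\}$-subalgebra of the direct product ${\m B}$, so the lattice order of ${\m B}_\Box$ is inherited from that of ${\m B}$, which in turn is the pointwise order. Hence condition (1) means precisely that $f(x) \leq g(x)$ for every $x \in X$, and the work is to show that this pointwise inequality is equivalent to the three ``structural'' conditions of (2).

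For (1) $\Rightarrow$ (2), I would just specialize pointwise inequality at well-chosen indices. If $x \in L_g$, then $f(x) \leq g(x) = 0$ forces $x \in L_f$; if $x \in U_f$, then $g(x) \geq f(x) = 1$ forces $x \in U_g$; and the condition $f(x) \leq g(x)$ on $S_f \cap S_g$ is just a restriction of the hypothesis.

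For (2) $\Rightarrow$ (1), fix an arbitrary $x \in X$ and use that, by Lemma~\ref{lem:aclabeling}(4), the sets $L_f, S_f, U_f$ partition $X$ and likewise for $g$. The proof then splits according to which block of each partition $x$ belongs to. Whenever $x \in L_f$ we have $f(x) = 0$, and whenever $x \in U_g$ we have $g(x) = 1$, so the inequality is automatic in those rows and columns. In the remaining cases: if $x \in L_g$ then $L_g \subseteq L_f$ gives $x \in L_f$ (reducing to a previous case); if $x \in U_f$ then $U_f \subseteq U_g$ gives $x \in U_g$ (also previous); and the only case left is $x \in S_f \cap S_g$, which is covered by the third hypothesis of (2). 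These cases exhaust the partition.

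There is no genuine obstacle here; the statement is essentially a bookkeeping lemma that records how the pointwise order interacts with the $L_f / S_f / U_f$ decomposition of an ac-labeling. The only thing to watch is that the three conditions in (2) together cover every possible combination of blocks from the two partitions—this is what the case analysis above verifies.
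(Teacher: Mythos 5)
Your proof is correct and follows essentially the same route as the paper's: both directions reduce to the pointwise order inherited from the direct product, with (1)$\Rightarrow$(2) obtained by specializing at indices and (2)$\Rightarrow$(1) by an exhaustive case analysis over the blocks of the $L/S/U$ partitions. The only cosmetic difference is that the paper organizes the converse by cases on the value of $g(x)$ rather than symmetrically over both partitions, but the content is identical.
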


\begin{proof}
Assume (1) and let $x\in X$. Note that if $g(x) = 0$ then $f\leq g$ implies $f(x)=0$, giving $L_g\subseteq L_f$. On the other hand, if $f(x)=1$, then $g(x)=1$ and we get $U_f\subseteq U_g$. That $f(x)\leq g(x)$ for all $x\in S_f\cap S_g$ is immediate from the hypothesis, so (2) follows.

Now for the converse assume (2). If $g(x)=0$, then $f(x)=0$ since $L_g\subseteq L_f$, so $f(x)\leq g(x)$. If $g(x)=1$, then $f(x)\leq g(x)$ follows automatically. In the only remaining case, $x\in S_g$. If $x\in S_f$ as well, then $x\in S_f\cap S_g$ and $f(x)\leq g(x)$ by hypothesis. If $x\notin S_f$, then $x\in L_f$ or $x\in U_f$. If $x\in U_f$, then $U_f\subseteq U_g$ implies $g(x)=1$, contradicting $x\in S_g$. On the other hand, if $x\in L_f$ then $f(x)=0\leq g(x)$. In every case, we get $f(x)\leq g(x)$ and the result follows.
\end{proof}

\subsection{Closure under poset products and embedding theorems.}
For our intended application of poset products to relational semantics, two questions concern us:
\begin{enumerate}[\normalfont (Q1)]
\item If $\sf V$ is a variety of residuated lattices, what conditions on the poset $(X,\leq)$ and the residuated lattices ${\m A}_x$, $x\in X$, guarantee that the corresponding poset product $\prod_{(X,\leq)} {\m A}_x$ is in ${\sf V}$?
\item If $\m A$ is a residuated lattice, what hypotheses on $(X,\leq)$ and ${\m A}_x$, $x\in X$, guarantee that we may embed ${\m A}$ in $\prod_{(X,\leq)} {\m A}_x$?
\end{enumerate}
The first question turns out to be closely related to the soundness of the corresponding relational semantics, whereas the second is closely related to completeness (cf. Lemma \ref{thm:general condition}).

Recall that a poset $(X,\leq)$ is a \emph{root system} if $\up x = \{y\in X : x\leq y\}$ is totally-ordered for each $x\in X$. The following lemma provides a partial answer to (Q1); it is mostly a summary of what is known in the literature regarding this question.
\begin{lemma}\label{lem:closure under poset prod}
Let $(X,\leq)$ be a poset and let $\{{\m A}_x : x\in X\}$ be an indexed family of residuated lattices. Set
$${\m B} = \prod_{(X,\leq)} {\m A}_x.$$
Then:
\begin{enumerate}[\normalfont (1)]
\item If ${\m A}_x$ is a GBL-algebra for each $x\in X$, then ${\m B}$ is a GBL-algebra.
\item If ${\m A}_x$ is an MV-algebra for each $x\in X$, then ${\m B}$ is a GBL-algebra.
\item If ${\m A}_x$ is $k$-potent for each $x\in X$, then ${\m B}$ is $k$-potent.
\item If ${\m A}_x$ is $\m 2$ for each $x\in X$, then ${\m B}$ is a Heyting algebra.
\item If $(X,\leq)$ is a root system and ${\m A}_x$ is a totally-ordered MV-algebra for each $x\in X$, then ${\m B}$ is a BL-algebra.
\item If $(X,\leq)$ is a root system and ${\m A}_x$ is $\m 2$ for each $x\in X$, then ${\m B}$ is a G\"odel algebra.
\item If $(X,\leq)$ is a chain and ${\m A}_x$ is a chain for each $x\in X$, then ${\m B}$ is also a chain.
\item If $(X,\leq)$ is a chain and ${\m A}_x$ is an MV-algebra chain for each $x\in X$, then ${\m B}$ is a totally-ordered BL-algebra.
\item If $(X,\leq)$ is a chain and ${\m A}_x$ is $\m 2$ for each $x\in X$, then ${\m B}$ is a totally-ordered G\"odel algebra.
\end{enumerate}
\end{lemma}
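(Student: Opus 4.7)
The plan is to establish the nine items in order, leveraging earlier items together with Lemmas \ref{lem:product subalgebra}, \ref{lem:aclabeling}, and \ref{lem:comparability}. The main algebraic content lies in items (1) and (5), which are documented in \cite{JM2009, JM2010, J2009}; my sketches follow the strategy of those sources.

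For (1), the core task is to verify the divisibility equation $f(f \to_\Box g) \eq f \meet g$ for arbitrary ac-labelings $f, g$. Since meet and product in $\m B_\Box$ are pointwise by Lemma \ref{lem:product subalgebra}, and $\to_\Box$ is obtained by applying $\Box$ to the pointwise residuum, I would check the identity coordinatewise at each $x \in X$. The computation splits according to whether $(f \to g)(y) = 1$ for every $y > x$. In the affirmative case, $\Box(f \to g)(x) = (f \to g)(x) = f(x) \to g(x)$, and the equation at $x$ reduces to divisibility in $\m A_x$. In the negative case, $\Box(f \to g)(x) = 0$, so the left-hand side vanishes; choosing a witness $y > x$ with $f(y) \not\leq g(y)$ (so $f(y) > 0$ and $g(y) < 1$), the ac-labeling conditions on $f$ and $g$ propagate this witness downward to force $f(x) = 0$ or $g(x) = 0$, whence $(f \meet g)(x) = 0$ as well. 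Item (2) is immediate from (1). Items (3) and (4) follow from Lemma \ref{lem:product subalgebra}: multiplication in $\m B_\Box$ is pointwise, so $x^{k+1} \eq x^k$ is preserved coordinatewise; and $\m 2$ is a $1$-potent GBL-algebra, so (1) and (3) combine to make $\m B_\Box$ a $1$-potent GBL-algebra, i.e., a Heyting algebra.

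For (5), (2) supplies the GBL structure, and it remains to verify prelinearity $(f \to_\Box g) \join (g \to_\Box f) \eq 1$. Since join is pointwise, this reduces to showing that for each $x$, either $f(y) \leq g(y)$ for every $y \geq x$, or $g(y) \leq f(y)$ for every $y \geq x$. The root-system hypothesis makes $\up x$ a chain, and the chain structure of each $\m A_y$ makes $f(y)$ and $g(y)$ individually comparable. The key ``no-crossing'' step is the following: if $y_1 < y_2$ in $\up x$ satisfied both $f(y_1) > g(y_1)$ and $f(y_2) < g(y_2)$, then $f(y_1) > 0$ and $f(y_2) < 1$, directly contradicting the ac-labeling condition for $f$, which requires $f(y_1) = 0$ or $f(y_2) = 1$. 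Item (6) then follows from (5) applied with $\m A_x = \m 2$ (which is a totally-ordered MV-algebra) together with (3) at $k = 1$. Item (7) is the same crossing argument applied globally to $(X, \leq)$, combined with Lemma \ref{lem:comparability}. Items (8) and (9) follow from (5)--(7) together with the observation that chains are root systems.

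The main obstacle is item (1): threading the pointwise arithmetic through the conucleus $\Box$ requires careful case analysis against the ac-labeling structure, particularly in confirming that the down-set $L_f$ or $L_g$ absorbs the witness $y$ back to $x$. Item (5) is conceptually subtler but is handled uniformly by the ``no-crossing'' argument, which is essentially the same as for (7); the remaining items reduce routinely to combinations of (1)--(7).
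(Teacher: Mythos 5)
Your proposal is correct, but it is substantially more self-contained than the paper's proof, which for the main algebraic items simply cites the literature: (1) is quoted from \cite{JM2009} (Theorems 29 and 30), and (4)--(6) from \cite{JM2010} (Theorem 6.2). You instead verify divisibility and prelinearity directly, and both verifications check out: in the divisibility computation, when $\Box(f\to g)(x)=0$ because some $y>x$ has $f(y)\not\leq g(y)$, the relevant observation is that $g(y)\neq 1$, so the ac-labeling condition on $g$ alone already forces $g(x)=0$ and hence $(f\meet g)(x)=0$; and your ``no-crossing'' argument for prelinearity is sound, though you should state explicitly that the case where the strict inequalities occur in the opposite orientation along $\up x$ is handled symmetrically by applying the ac-labeling condition to $g$ rather than $f$ (the two points cannot coincide, and in a root system they are comparable, so these two orientations exhaust the possibilities). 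For item (7) the paper argues differently, via Lemma~\ref{lem:comparability} and the observation that the up-sets $U_f$, $U_g$ of a chain are nested and the antichains $S_f$, $S_g$ are at most singletons; your global no-crossing argument reaches the same conclusion more directly and uses only that each $\m A_y$ is a chain, which is all that (7) assumes. The trade-off is the usual one: the paper's citations keep the exposition short and defer to results proved in greater generality (non-commutative, unbounded), while your route makes the lemma verifiable from the definitions and Lemmas~\ref{lem:product subalgebra} and~\ref{lem:aclabeling} alone. Your derivations of (4) from (1) and (3), and of (6) from (5) and (3), are also legitimate shortcuts given the paper's definitions of Heyting and G\"odel algebras as $1$-potent GBL- and BL-algebras; items (2), (3), (8), (9) match the paper exactly.
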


\begin{proof}
(1) immediate from \cite[Theorem 29, Theorem 30]{JM2009}. (2) is immediate from (1) since MV-algebras are, in particular, GBL-algebras. Lemma~\ref{lem:product subalgebra} implies (3), and (4), (5), and (6) follow from \cite[Theorem 6.2]{JM2010}.

For (7), let $f,g\in\m B$. The up-sets of a chain are totally-ordered under inclusion, so we have $U_f\subseteq U_g$ or $U_g\subseteq U_f$; assume $U_f\subseteq U_g$ without loss of generality. Then we have $U_g^\mathsf{c}\subseteq U_f^\mathsf{c}$ by taking complements. Since $(X,\leq)$ is a chain and $S_f$, $S_g$ are antichains of $(X,\leq)$, the cardinalities of each of $S_f$ and $S_g$ are at most $1$. If $S_f=\emptyset$, then we have that $L_g\subseteq U_g^\mathsf{c}\subseteq U_f^\mathsf{c}=L_f$. Since $f(x)\leq g(x)$ for all $x\in S_f\cap S_g$ vacuously, it follows from Lemma~\ref{lem:comparability} that $f\leq g$ in this case. For the only remaining case, assume $S_f=\{x\}$ is a singleton. Then $U_f = \{y\in X : x < y\}$ and $L_f=\{y\in X : y < x\}$. If $x\in S_g$ as well, then $U_g=U_f$ and $L_g=L_f$, and Lemma~\ref{lem:comparability} gives $f\leq g$ or $g\leq f$ according to whether $f(x)\leq g(x)$ or $g(x)\leq f(x)$ (and one of these must hold, since ${\m A}_x$ is a chain). If $x\notin S_g$, then either $x\in U_g$ (in which case $f\leq g$) or $x\in L_g$ (in which case $g\leq f$). This shows ${\m B}$ is a chain.

Since chains are in particular root systems, (8) and (9) are immediate consequences of (5) and (6), respectively.
\end{proof}

We now turn to (Q2). In \cite{JM2010}, Jipsen and Montagna provided embedding theorems for various classes of GBL-algebras in a quite general setting (not necessarily integral, commutative, or bounded). We will not need the full strength of these results for our present purposes, and we confine our discussion to almost finite GBL-algebras. The embedding theorems are simpler in this setting, and we will briefly sketch them. The following material is drawn from \cite{JM2009} and \cite{JM2010}.

Let ${\m A}$ be a $k$-potent GBL-algebra and let $\Delta = \Delta({\m A})$ be its set of values. For each $x\in \Delta$, the quotient algebra ${\m A}/x$ is subdirectly irreducible and hence has a minimum nontrivial deductive filter. This minimum nontrivial deductive filter comprises a $\{\meet,\join,\cdot,\to,1\}$-subalgebra of ${\m A}/x$. One may show that this subalgebra is the $0$-free reduct of a finite simple $k$-potent MV-algebra chain ${\m A}_x$ (i.e., ${\m A}_x$ is isomorphic to some ${\textbf{\L}}_m$, $2\leq m\leq k+1$, by \cite[Corollary 3.5.4]{CDM2000}). The poset product of the resulting family will be used in the sequel, so we introduce the following notation:
$$E({\m A})=\prod_{(\Delta,\subseteq)} {\m A}_x.$$
Further, for each $a\in A$, we define a choice function $\varepsilon_a\in\prod_{x\in\Delta} {\m A}_x$ by
\[ \varepsilon_a(x) = \begin{cases}
      a/x & \text{ if }a/x\in A_x \\
      0 & \text{ otherwise.}
   \end{cases}
\]
With the above set-up, we obtain the following.
\begin{lemma}[{\cite[Theorem 4.1]{JM2010}}]\label{lem:embed almost finite}
Let ${\m A}$ be a $k$-potent GBL-algebra. Then each $\varepsilon_a$ is an ac-labeling and ${\m A}$ embeds in the poset product $E({\m A})$ via the map $a\mapsto\varepsilon_a$.
\end{lemma}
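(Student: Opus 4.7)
The plan is to verify, in turn, three assertions implicit in the lemma: (i) each $\varepsilon_a$ is an ac-labeling, hence lies in $E(\m A)$; (ii) the assignment $a \mapsto \varepsilon_a$ preserves every operation of the residuated lattice signature; and (iii) this assignment is injective. I expect the main obstacle to be (ii), specifically preservation of the residual $\to$, due to the delicate interaction with the conucleus $\Box$.

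For (i), I would verify the ac-labeling condition using Lemma~\ref{lem:aclabeling}(3). Fix values $x \subsetneq y$ in $\Delta$, and suppose $\varepsilon_a(x) \neq 0$; then $a/x$ lies in the minimum nontrivial deductive filter of $\m A/x$. Since $x$ is a value, it is maximal among deductive filters of $\m A$ not containing some $b \in A$, so every deductive filter of $\m A$ strictly larger than $x$ contains $b$. Translating to $\m A/x$, every nontrivial deductive filter of $\m A/x$ contains $b/x$, so the minimum nontrivial filter is generated by $b/x$ and thus $a/x \geq b^n/x$ for some $n$. Because $x \subsetneq y$, maximality of $x$ with respect to $b$ forces $b \in y$; hence $b^n \in y$ and $b^n/y = 1$. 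Applying the natural order-preserving quotient map $\m A/x \to \m A/y$ to $a/x \geq b^n/x$ then yields $a/y \geq 1$, so $a \in y$ and $\varepsilon_a(y) = 1$.

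For (ii), preservation of the constants $0$ and $1$ is immediate, and preservation of $\meet$, $\join$, and $\cdot$ reduces to a finite case analysis at each value $x$, exploiting that these operations are pointwise in the poset product (Lemma~\ref{lem:product subalgebra}) and that the structure of $A_x$ inside $\m A/x$ controls when quotient elements fall outside $A_x$. The technically demanding step will be showing $\varepsilon_{a \to b} = \varepsilon_a \to_\Box \varepsilon_b = \Box(\varepsilon_a \to \varepsilon_b)$. The definition of $\Box$ forces a split into two subcases at each $x$: if $\varepsilon_a(y) \leq \varepsilon_b(y)$ for all $y > x$, one must show $(a \to b)/x$ lies in $A_x$ and that its value matches $\varepsilon_a(x) \to \varepsilon_b(x)$ computed inside $\m A_x$; if some $y > x$ witnesses $\varepsilon_a(y) \not\leq \varepsilon_b(y)$, one must show $(a \to b)/x \notin A_x$, so that $\varepsilon_{a \to b}(x) = 0$, by transporting the witness back through the quotient map $\m A/x \to \m A/y$. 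Both subcases rely essentially on divisibility in $\m A$ together with the structure of $A_x$.

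For (iii), by the filter-congruence correspondence recalled in Section~\ref{sec:residuated lattices}, injectivity of $\varepsilon$ reduces to showing $\varepsilon_a = \varepsilon_1$ implies $a = 1$. If $\varepsilon_a(x) = 1$ for every value $x$, then $a \in x$ for every $x \in \Delta$; but a standard Zorn's lemma argument shows that every element of $\m A$ strictly below $1$ lies outside some value, so $a = 1$. This completes the embedding.
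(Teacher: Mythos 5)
The paper itself offers no proof of this lemma---it is imported wholesale from \cite[Theorem 4.1]{JM2010}---so your sketch can only be measured against what a complete argument requires. Your overall architecture (ac-labeling, homomorphism, injectivity) is the right one, and parts (i) and (iii) are essentially complete: the observation that $A_x$ is the deductive filter of ${\m A}/x$ generated by $b/x$ (where $x$ is a value witnessed by $b$), so that $a/x\in A_x$ forces $a/x\geq b^n/x$ and hence $a/y=1$ for every $y\supsetneq x$, is exactly the argument needed for (i); likewise the Zorn argument for (iii) is correct.

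The gap is in (ii), and it is not only the expected difficulty of $\to$. Every nontrivial case of the verification---already for $\join$, and decisively for $\to$---rests on the ordinal-sum decomposition ${\m A}/x\cong{\m B}_x\oplus{\m A}_x$ of the subdirectly irreducible quotients (the Blok--Ferreirim-type theorem of \cite{JM2009}, recalled in the Remark following the lemma). This is what guarantees that every element of $({\m A}/x)\setminus A_x$ lies strictly below every element of $A_x$ and supplies the explicit formulas for $\cdot$ and $\to$ across the two components; ``divisibility together with the structure of $A_x$'' does not deliver these facts on its own, and your sketch never invokes the decomposition. Concretely, consider the subcase where $\varepsilon_a(y)\leq\varepsilon_b(y)$ for all $y\supsetneq x$, $a/x\in A_x$ and $b/x\notin A_x$: here $(a\to b)/x=b/x\notin A_x$ by the ordinal-sum formula, while $\varepsilon_a(x)\to_{{\m A}_x}\varepsilon_b(x)$ can be strictly above the bottom of ${\m A}_x$ when $a/x\neq 1$, so a pointwise computation produces an apparent mismatch. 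The resolution is to show this configuration is vacuous: since $(a\to b)/x\neq 1$, a Zorn argument yields a value $y\supseteq x$ maximal with respect to omitting $a\to b$, and $(a\to b)/y\in A_y\setminus\{1\}$ forces (again via the decomposition) $a/y,b/y\in A_y$ with $a/y\not\leq b/y$, so $y\supsetneq x$ and the hypothesis fails. This extra step is invisible in your sketch, and your blanket claim that one must show $(a\to b)/x\in A_x$ is only true once that step is supplied. So: right skeleton, but the load-bearing structural input and at least one essential subcase are missing.
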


\begin{remark}
We have presented the foregoing construction somewhat differently than \cite{JM2010}. First, that paper states the embedding result only for the $0$-free signature, but inspection of the proof confirms that the embedding also preserves $0$. Second, in \cite{JM2010} the subdirectly irreducible GBL-algebra ${\m A}/x$ is decomposed as an ordinal sum ${\m B}_x\oplus {\m W}_x$ of a proper subalgebra ${\m B}_x$ of ${\m A}/x$ and (the 0-free reduct of) a nontrivial $k$-potent MV-algebra chain ${\m W}_x$ with at most $k+1$ elements. Note that in this context \emph{ordinal sum} refers to the notion defined for (integral) GBL-algebras in \cite{JM2009} rather than the more widely-known notion for hoops (as depicted in, e.g., \cite{BF2000}). The algebras ${\m W}_x$ are used as the factors of the poset product in \cite{JM2010}. Clearly ${\m W}_x$ is a nontrivial deductive filter. Since nontrivial finite $k$-potent MV-algebra chains are simple algebras, it follows that ${\m W}_x$ is the least nontrivial deductive filter of ${\m A}/x$, i.e., ${\m W}_x={\m A}_x$. We refer to the reader to \cite{JM2009} for a full discussion of ordinal sums in this context. Compare also with \cite[Proof of Theorem 16]{J2009}.
\end{remark}

For finite GBL-algebras, the latter embedding result may be upgraded to a \emph{representation}. In the sequel, we will apply the following lemma to study subvarieties of ${\sf GBL}$ with the FMP.

\begin{lemma}[{\cite[Theorem 33]{JM2009}}]\label{lem:finite rep}
Let ${\m A}$ be a finite GBL-algebra. Then there exist finite simple MV-algebras ${\m A}_x$, $x\in\Delta=\Delta({\m A})$, such that
$${\m A}\cong\prod_{(\Delta,\subseteq)} {\m A}_x.$$
Moreover, ${\m A}_x$ is a $\{\meet,\join,\cdot,\to,1\}$-subalgebra of a quotient of ${\m A}$ for each $x\in\Delta$, whence each ${\m A}_x$ satisfies all $0$-free identities that are satisfied in ${\m A}$.
\end{lemma}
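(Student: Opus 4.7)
Since ${\m A}$ is finite, Lemma~\ref{lem:almost finite} provides a positive integer $k$ such that ${\m A}$ is $k$-potent. Lemma~\ref{lem:embed almost finite} then furnishes an embedding $\varepsilon\colon{\m A}\inj E({\m A})$ given by $a\mapsto \varepsilon_a$, where $E({\m A})=\prod_{(\Delta,\subseteq)} {\m A}_x$ and each ${\m A}_x$ is the minimum nontrivial deductive filter of the subdirectly irreducible quotient ${\m A}/x$. Since ${\m A}$ is finite, each ${\m A}/x$ is finite, so each ${\m A}_x$ is a finite simple $k$-potent MV-algebra chain (isomorphic to some ${\textbf{\L}}_m$ with $2\leq m\leq k+1$), which discharges the first assertion. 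The ``moreover'' clause is then immediate: ${\m A}_x$ is by construction a subalgebra of the quotient ${\m A}/x$, and homomorphic images of ${\m A}$ belong to the variety generated by ${\m A}$.

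To upgrade the embedding to an isomorphism, the plan is to show that $\varepsilon$ is surjective whenever ${\m A}$ is finite. My strategy is to produce, for each ac-labeling $f\in E({\m A})$, an explicit preimage $a\in A$ with $\varepsilon_a=f$. Given such an $f$, Lemma~\ref{lem:aclabeling}(4) partitions $\Delta$ as $L_f\cup S_f\cup U_f$. For each $x\in S_f$, the value $f(x)$ lies in ${\m A}_x\subseteq {\m A}/x$, so one can choose a lift $a_x\in A$ with $a_x/x=f(x)$. The candidate preimage is then assembled from meets and products of the $a_x$ together with correction elements designed to force the value $0$ at each point of $L_f$ and the value $1$ at each point of $U_f$. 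Such correction terms can be synthesized from elements whose classes lie outside ${\m A}_y$ for $y\in L_f\cup U_f$, leveraging the ordinal sum decomposition ${\m A}/y\iso {\m B}_y\oplus {\m A}_y$ of each finite subdirectly irreducible quotient (recalled in the remark preceding the lemma) together with divisibility.

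The principal obstacle is verifying that this construction simultaneously produces the correct value at every coordinate: the lifts $a_x$ may have uncontrolled classes at other values $y\neq x$, and showing that the correction elements isolate the behaviour as required is delicate. Here the poset structure of $(\Delta,\subseteq)$ enters crucially, since the conucleus $\Box$ defining $E({\m A})$ encodes exactly the compatibility between the values of a choice function at comparable points; monotonicity of the projections ${\m A}/x\to {\m A}/y$ for $x\subseteq y$ and their restrictions to the minimum filters ${\m A}_x$ should give just enough freedom to realize every ac-labeling. A conceivably cleaner alternative would be a cardinality argument: compute $|A|$ and the number of ac-labelings in $E({\m A})$ from the common data of the poset of values together with the local MV-chain cardinalities, so that matching these numbers upgrades the injectivity of $\varepsilon$ to bijectivity without an explicit inverse.
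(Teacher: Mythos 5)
The paper offers no internal proof of Lemma~\ref{lem:finite rep}: it is imported wholesale as Theorem~33 of Jipsen and Montagna \cite{JM2009}, so the only thing to check is whether your argument would actually establish the cited result. Your first paragraph is correct and disposes of the easy half: finiteness gives $k$-potency via Lemma~\ref{lem:almost finite}, Lemma~\ref{lem:embed almost finite} gives the embedding $a\mapsto\varepsilon_a$ into $E({\m A})$ with each ${\m A}_x$ a finite simple MV-chain, and the ``moreover'' clause is immediate since ${\m A}_x$ is a subalgebra of ${\m A}/x$.

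The genuine gap is that the entire content separating Lemma~\ref{lem:finite rep} from Lemma~\ref{lem:embed almost finite} --- surjectivity of the embedding when ${\m A}$ is finite --- is never established. You describe two strategies and execute neither. For the first, given an ac-labeling $f$ you choose lifts $a_x$ with $a_x/x=f(x)$ for $x\in S_f$, but as you yourself note, these lifts have uncontrolled images at every other value $y$, and the ``correction elements'' that are supposed to force the value $0$ on $L_f$ and $1$ on $U_f$ while leaving the $S_f$-coordinates intact are never exhibited; the appeal to the ordinal sum decomposition and divisibility is a gesture at where such elements might come from, not a construction. For the second, the cardinality argument requires computing $|A|$ purely from the poset $(\Delta,\subseteq)$ and the local chain sizes $|A_x|$, which is essentially equivalent to the structure theorem you are trying to prove --- in \cite{JM2009} this is obtained by induction on $|A|$ using the Blok--Ferreirim ordinal sum decomposition of the finite subdirectly irreducible quotients, not by an a~posteriori count. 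As written, your argument proves only the embedding statement already recorded in Lemma~\ref{lem:embed almost finite}, and the isomorphism claim remains unproven.
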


\section{Frames, relational models, and satisfaction}\label{sec:frames}
We now arrive at our main topic of interest: Extracting relational semantics from poset products.
\subsection{Frames and satisfaction.}\label{sec:frames and satisfaction}
\begin{definition}\label{def:frame}
A \emph{frame} is an ordered triple $(X,\leq,\mathbb{A})$, where
\begin{enumerate}[\normalfont (1)]
\item $(X,\leq)$ is a poset.
\item $\mathbb{A}=\{{\m A}_x : x\in A\}$ is an indexed family of residuated lattices.
\end{enumerate}
If $\mathsf{K}$ is a class of posets, we say that the frame $(X,\leq,\mathbb{A})$ is \emph{$\mathsf{K}$-based} or \emph{based in $\mathsf{K}$} when $(X,\leq)\in\mathsf{K}$. Likewise, if $\mathsf{V}$ is a class of residuated lattices, we say that $(X,\leq,\mathbb{A})$ is \emph{$\mathsf{V}$-valued} or \emph{valued in $\mathsf{V}$} when ${\m A}_x\in\mathsf{V}$ for every $x\in X$.

For any frame $F=(X,\leq,\{{\m A}_x : x\in X\})$, we define
$$P(F) = \prod_{(X,\leq)} {\m A}_x$$
to be the \emph{poset product associated to $F$}. A \emph{valuation} in a frame $(X,\leq, \{{\m A}_x\}_{x\in X})$ is a function $h\colon\var\to P(F)$. A \emph{relational model} is an ordered pair $(F,h)$, where $F$ is a frame and $h$ is a valuation in $F$. We say that a model $(F,h)$ is $\mathsf{K}$-based, based in $\mathsf{K}$, $\mathsf{K}$-valued, or valued in $\mathsf{K}$ whenever its underlying frame $F$ is. When $\sf K$ is a singleton, we drop the exterior braces (and thus refer to ${\m A}$-valued models, and so forth).
\end{definition}

If $(F,h)$ is a relational model, then $h$ is in particular an algebraic assignment into the poset product $P(F)$. We define for each $x\in X$,
$$(F,h),x\km\varphi \iff \hat{h}(\varphi)(x)=1.$$
Following the usual terminology of Kripke semantics, we say \emph{$\varphi$ is true at $x$ in $(F,h)$} when the above holds. We obtain a definition of $\km$ that is more familiar in relational semantics by explicitly writing $\hat{h}$ in terms of its recursive definition. In particular, $\hat{h}$ may be defined recursively by setting $\hat{h}(0)=0,$ $\hat{h}(1)=1,$ $\hat{h}(p)(x) = h(p)(x)$ for each $p\in\var$, and extending this definition to terms by
$$\hat{h}(\varphi\meet\psi)(x) = \hat{h}(\varphi)(x)\meet \hat{h}(\psi)(x),$$
$$\hat{h}(\varphi\join\psi)(x) = \hat{h}(\varphi)(x)\join \hat{h}(\psi)(x),$$
$$\hat{h}(\varphi\cdot\psi)(x) = \hat{h}(\varphi)(x)\cdot \hat{h}(\psi)(x),$$
$$\hat{h}(\varphi\to\psi)(x) = \Box (\hat{h}(\varphi)\to \hat{h}(\psi))(x),$$

We further define $(F,h)\km\varphi$ provided that $(F,h),x\km\varphi$ for all $x\in X$, and in this case we say \emph{$\varphi$ is true in $(F,h)$}. For a frame $F$, we define $F\km\varphi$ if $(F,h)\km\varphi$ for every valuation $h$ into $F$, and say that \emph{$\varphi$ is valid in $F$}. If $\sf K$ is a class of frames, we further say that $\varphi$ is \emph{valid in $\sf K$} and write $\sf K\km\varphi$ provided that $\varphi$ is valid in $F$ for each $F\in\sf K$.
\begin{lemma}\label{lem:forcing and algebra}
Let $F=(X,\leq,\{{\m A}_x : x\in X\})$ be a frame and $\varphi\in{\bf Tm}$. The following are equivalent.
\begin{enumerate}[\normalfont (1)]
\item $F\km\varphi$.
\item $\varphi\eq 1$ is valid in $P(F)$.
\end{enumerate}
\end{lemma}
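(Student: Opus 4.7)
The plan is to reduce the biconditional to a routine identification: showing that the recursive definition of $\hat{h}$ in the relational semantics coincides with the algebraic homomorphism ${\bf Tm}\to P(F)$ induced by viewing $h$ as an algebraic assignment into $P(F)$. Once this is done, the equivalence falls out of the definitions together with the observation that the top element of $P(F)$ is the constant-$1$ function (since $\Box 1 = 1$ by the definition of $\Box$).

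First I would fix a frame $F=(X,\leq,\{{\m A}_x : x\in X\})$, let ${\m B}=\prod_{x\in X}{\m A}_x$ be the ordinary direct product, and recall that $P(F)={\m B}_\Box$ is a conuclear image of ${\m B}$. Given a valuation $h\colon\var\to P(F)$, the free property of ${\bf Tm}$ supplies a unique homomorphism $\tilde h\colon{\bf Tm}\to P(F)$ extending $h$. The key step is to verify by induction on term complexity that $\tilde h=\hat h$, where $\hat h$ is given by the recursive clauses listed just before the lemma. For the base case and the constants this is immediate. For the inductive steps at $\meet$, $\join$, and $\cdot$, I would invoke Lemma~\ref{lem:product subalgebra}, which says that these operations in $P(F)$ are inherited pointwise from ${\m B}$; this matches the three recursive clauses verbatim. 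For the implication step I would use the definition of the conuclear image from Section~\ref{sec:residuated lattices}: in ${\m B}_\Box$, the implication of two elements is $\Box$ applied to their implication in ${\m B}$, i.e.\ exactly the clause $\hat h(\varphi\to\psi)(x)=\Box(\hat h(\varphi)\to\hat h(\psi))(x)$.

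With $\tilde h=\hat h$ established, the biconditional is essentially bookkeeping. By definition, $F\km\varphi$ means that for every valuation $h$ and every $x\in X$ one has $\hat h(\varphi)(x)=1$; equivalently, $\hat h(\varphi)$ is the constant function $1$. Since $\Box$ fixes the constant-$1$ function, this constant function is precisely the top element of $P(F)$. Hence $F\km\varphi$ holds exactly when, for every algebraic assignment $h$ into $P(F)$, $\hat h(\varphi)$ equals the top of $P(F)$, which is the assertion that $\varphi\eq 1$ is valid in $P(F)$.

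The only place where any care is needed is the implication clause in the inductive matching of $\tilde h$ with $\hat h$, because this is where the conucleus $\Box$ makes its appearance and where one might mistakenly expect the ordinary pointwise implication in ${\m B}$; everything else is routine. No separate obstacle arises, so I would present the proof as a short inductive verification followed by a one-sentence extraction of the equivalence.
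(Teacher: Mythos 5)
Your proposal is correct and follows essentially the same route as the paper: the lemma is proved by unwinding the definitions of $\km$ and of validity, using the fact that a valuation in $F$ is exactly an algebraic assignment into $P(F)$ and that the top element of $P(F)$ is the constant-$1$ function. The inductive check that the recursive clauses for $\hat h$ agree with the free-extension homomorphism into $P(F)$ is a reasonable extra precaution, but the paper treats that identification as part of the definition of the forcing relation in the paragraph preceding the lemma, so the proof itself reduces to the same chain of definitional equivalences you describe.
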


\begin{proof}
We have:
\begin{align*}
F\km\varphi &\iff (F,h)\km\varphi\text{ for all }h\colon\var\to P(F)\\
&\iff (F,h),x\km\varphi\text{ for all }h\colon\var\to P(F),x\in X\\
&\iff \hat{h}(\varphi)(x)=1\text{ for all }h\colon\var\to P(F),x\in X\\
&\iff \hat{h}(\varphi)\equiv 1\text{ for all }h\colon\var\to P(F)\\
&\iff \varphi\eq 1 \text{ is valid in }P(F).\\
\end{align*}
This yields the claim.
\end{proof}

\subsection{Comparison to the literature.}\label{sec:comparison}

The semantics introduced in Section \ref{sec:frames and satisfaction} unifies and generalizes several kinds of relational semantics appearing in the literature. We will now make these connections explicit.\\

\noindent {\bf Intuitionistic Kripke frames.} The celebrated Kripke semantics for intuitionistic logic is the most widely-known relational semantics for any substructural logic, and is treated extensively in, e.g., \cite[Chapter 2]{CZ1997}. In this setting, the term `frame' is used as a synonym for `poset,' and a `valuation' consists of a map from the set of propositional variables into the up-sets of that poset. The following connects this usage to our terminology.
\begin{proposition}\label{prop:Kripke connection}
Let $(X,\leq,\{{\m A}_x : x\in X\})$ be a frame valued in ${\m 2}$, and let $h\colon\var\to\prod_{x\in X} {\m A}_x$. The following are equivalent.
\begin{enumerate}[\normalfont (1)]
\item $h$ is a valuation in $(X,\leq,\{{\m A}_x : x\in X\})$.
\item $\{x\in X : h(p)(x) = 1\}$ is a up-set of $(X,\leq)$ for each $p\in\var$.
\end{enumerate}
\end{proposition}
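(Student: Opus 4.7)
The plan is to unpack the definition of a valuation and then invoke Lemma~\ref{lem:aclabeling}, exploiting the fact that each fiber ${\m A}_x$ is the two-element Boolean algebra $\m 2 = \{0,1\}$.

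First, I would recall that by Definition~\ref{def:frame}, $h$ is a valuation in $(X,\leq,\{{\m A}_x : x \in X\})$ precisely when its codomain is $P(F) = \prod_{(X,\leq)} {\m A}_x$; equivalently, when $h(p)$ is an ac-labeling (in the sense of Lemma~\ref{lem:aclabeling}) for every $p \in \var$. The hypothesis that $h$ maps into $\prod_{x \in X} {\m A}_x$ (the direct, not poset, product) is what makes the distinction meaningful: the condition to be checked is exactly the $\Box$-fixed-point property.

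Next, I would apply item~(4) of Lemma~\ref{lem:aclabeling} fiberwise. For each $p \in \var$, $h(p)$ is an ac-labeling iff $S_{h(p)} = \{x \in X : h(p)(x) \notin \{0,1\}\}$ is an antichain of $(X,\leq)$, $L_{h(p)} = h(p)^{-1}(0)$ is a down-set, and $U_{h(p)} = h(p)^{-1}(1)$ is an up-set. Since every ${\m A}_x$ is $\m 2$, the image of $h(p)$ is already contained in $\{0,1\}$, so $S_{h(p)} = \emptyset$ automatically (vacuously an antichain). Hence the three conjuncts collapse to the single condition that $L_{h(p)}$ is a down-set and $U_{h(p)}$ is an up-set.

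Finally, I would note that since $\{L_{h(p)}, U_{h(p)}\}$ partitions $X$ (as $S_{h(p)} = \emptyset$), the two remaining set-theoretic requirements are equivalent: $U_{h(p)}$ is an up-set iff its complement $L_{h(p)}$ is a down-set. Thus $h(p)$ is an ac-labeling iff $\{x \in X : h(p)(x) = 1\} = U_{h(p)}$ is an up-set of $(X,\leq)$. Quantifying over $p \in \var$ yields the equivalence of (1) and (2). There is no genuine obstacle here; the only thing to be careful about is matching the definition of valuation (codomain in $P(F)$ rather than merely the direct product) with the correct clause of Lemma~\ref{lem:aclabeling} and observing that $S_f = \emptyset$ is forced by the two-valuedness of each fiber.
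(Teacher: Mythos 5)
Your proof is correct and follows essentially the same route as the paper's: both reduce the claim to the characterization of ac-labelings in Lemma~\ref{lem:aclabeling}. The only cosmetic difference is that the paper checks clause (3) of that lemma directly for the converse direction, whereas you run both directions through clause (4) by observing that two-valuedness forces $S_{h(p)}=\emptyset$ and that $L_{h(p)}$ and $U_{h(p)}$ are then complementary.
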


\begin{proof}
If $h$ is a valuation in $(X,\leq,\{{\m A}_x : x\in X\})$, then $h(p)\in\prod_{(X,\leq)} {\m A}_x$ for each $p\in\var$ and by Lemma~\ref{lem:aclabeling}(4) we have that $U_{h(p)}=\{x\in X : h(p)(x)=1\}$ is an up-set of $(X,\leq)$. This gives (1) implies (2).

For the converse, suppose that $\{x\in X : h(p)(x) = 1\}$ is a up-set of $(X,\leq)$ for each $p\in\var$. It suffices to show that each function $h(p)$ is an ac-labeling, so suppose that $x<y$ in $X$. If $h(p)(x)\neq 0$, then since $h(p)$ only takes values in $\{0,1\}$ we have $h(p)(x)=1$. Hence by hypothesis $h(p)(y)=1$. It follows that $h(p)(x)=0$ or $h(p)(y)=1$, giving the result. 
\end{proof}
Proposition~\ref{prop:Kripke connection} shows that Definition~\ref{def:frame} agrees with the usual intuitionistic definition: An intuitionistic frame is a $\m 2$-valued frame in our terminology, and if $h$ is a valuation then the corresponding intuitionistic valuation is the map from $\var$ into up-sets given by $p\mapsto U_{h(p)}$. It is straightforward to see that our definition of $\km$ also coincides with the usual intuitionistic forcing relation when we restrict our attention to $\bf 2$-valued frames.\\

\noindent {\bf Bova-Montagna structures.} In \cite{LOR2020}, Lewis-Smith, Oliva, and Robinson give a relational semantics for the logic \textbf{I\L L} in terms of \emph{Bova-Montagna structures}. The latter are exactly $\mv$-valued models in the sense of Definition~\ref{def:frame} (compare with \cite[Definition 3.3]{LOR2020}). Valuations are internalized in the forcing relation in that treatment, so frames (as opposed to models) are not considered.

Our definition of the forcing relation coincides with the one Lewis-Smith, Oliva, and Robinson provide (cf. \cite[Definition 3.6]{LOR2020}). However, there are several notational differences. First, antichain labelings are called \emph{sloping functions} in \cite{LOR2020}, and that paper denotes our $\Box$ by $\floor{{\bf inf}}_{v\succeq w}$. It makes no mention of the connection to conuclei or modal logic. Second, \cite{LOR2020} uses additive notation for MV-algebras instead of the multiplicative notation we adopt. The additive and multiplicative signatures of MV-algebras are term-equivalent, so this makes no substantive difference. However, the multiplicative signature is indispensable for generalization beyond frames valued in MV-algebras.\\

\noindent {\bf Temporal flows.} In \cite{ABM2009}, Aguzzoli, Bianchi, and Marra provide a semantics for H\'ajek's basic logic using \emph{temporal flows}. A temporal flow is a structure $(T,\leq,L)$, where $(T,\leq)$ is a poset and and $L\colon T\to\mathbb{N}$ is a function. A \emph{temporal assignment} is a map $v\colon\var\times T\to [0,1]$ such that for any $t,t'\in T$ and $p\in\var$:
\begin{enumerate}[\normalfont (1)]
\item $v(p,t)\in\textbf{\L}_{L(t)}$.
\item If $t\leq t'$, then $v(p,t)\leq v(p,t')$.
\item If $t\neq t'$ and $v(p,t),v(p,t')\in (0,1)$, then $t$ and $t'$ are incomparable.
\end{enumerate}
Clearly, each temporal flow $(T,\leq,L)$ induces a frame $(T,\leq,\mathbb{L})$ by setting $\mathbb{L}=\{\textbf{\L}_{L(t)} : t\in T\}$. Conversely, if $(X,\leq,\mathbb{A})$ is a frame valued in $\{\textbf{\L}_k : k\in\mathbb{N}\}$, then we may define a function $L\colon X\to\mathbb{N}$ by setting $L(x)=k$, where ${\m A}_x = \textbf{\L}_k$. The resulting structure $(X,\leq,L)$ is a temporal flow.
\begin{proposition}\label{prop:temporal assign and val}
Let $(T,\leq,L)$ be a temporal flow and $(X,\leq,\mathbb{A})$ be a frame valued in $\{\textbf{\L}_k : k\in\mathbb{N}\}$.
\begin{enumerate}[\normalfont (1)]
\item If $v\colon\var\times T\to [0,1]$ is a temporal assignment, then the map $\bar{v}\colon\var\to\prod_{t\in T} \textbf{\L}_{L(t)}$ given by $\bar{v}(p)(t)=v(p,t)$ is a valuation in the frame $(T,\leq,\mathbb{L})$, where $\{\textbf{\L}_{L(t)} : t\in T\}$.
\item If $h$ is a valuation in $(X,\leq,\{{\m A}_x : x\in X\})$, then the map $\bar{h}\colon\var\times X\to [0,1]$ defined by $\bar{h}(p,x)=h(p)(x)$ is a temporal assignment.
\end{enumerate}
\end{proposition}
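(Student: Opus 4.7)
The plan is to treat both directions as direct translations of definitions, with the bridge between the two notions being Lemma~\ref{lem:aclabeling}(3): a function $f \in \prod_{t \in T} \textbf{\L}_{L(t)}$ lies in $P(F)$ exactly when, for every pair $t < t'$, either $f(t) = 0$ or $f(t') = 1$. The three defining conditions of a temporal assignment and the three items in Lemma~\ref{lem:aclabeling}(4) track one another closely.

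For part (1), I fix a temporal assignment $v$ and a propositional variable $p$, and verify via Lemma~\ref{lem:aclabeling}(3) that $\bar{v}(p)$ is an ac-labeling. Suppose $t < t'$ in $T$ and $\bar{v}(p)(t) \neq 0$; I case-split on whether $v(p,t) = 1$ or $v(p,t) \in (0,1)$. In the former case, condition (2) of a temporal assignment (monotonicity) forces $v(p,t') = 1$. In the latter case, condition (3) rules out $v(p,t') \in (0,1)$ since $t$ and $t'$ are comparable, while condition (2) gives $v(p,t') \geq v(p,t) > 0$; these combine to force $v(p,t') = 1$. Hence $\bar{v}(p)(t') = 1$, so $\bar{v}(p)$ is an ac-labeling and therefore $\bar{v}(p) \in P(F)$.

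For part (2), I fix a valuation $h$ and a propositional variable $p$, and verify the three defining conditions for $\bar{h}$ in order. Condition (1) holds immediately because $h(p)(x) \in {\m A}_x = \textbf{\L}_{L(x)}$. For conditions (2) and (3), I use the ac-labeling property of $h(p)$ via Lemma~\ref{lem:aclabeling}(3). If $x \leq x'$ with $x \neq x'$, then either $h(p)(x) = 0$ or $h(p)(x') = 1$; either alternative forces $h(p)(x) \leq h(p)(x')$ within $[0,1]$, giving condition (2). If $x \neq x'$ with both $\bar{h}(p,x), \bar{h}(p,x') \in (0,1)$, then neither equation $h(p)(x) = 0$ nor $h(p)(x') = 1$ can hold, and Lemma~\ref{lem:aclabeling}(3) (applied in both possible orders of $x,x'$) forces $x$ and $x'$ to be incomparable, giving condition (3).

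There is essentially no serious obstacle: the whole proposition is a routine unraveling of the two definitions. The only subtlety worth flagging is that condition (2) of a temporal assignment looks a priori stronger than Lemma~\ref{lem:aclabeling}(3), since it demands monotonicity everywhere rather than just the dichotomy ``$f(t)=0$ or $f(t')=1$''; however, once one observes that the values live in $[0,1]$ so that $0$ is below everything and everything is below $1$, monotonicity is an automatic consequence of the ac-labeling dichotomy, and the equivalence of the two viewpoints becomes transparent.
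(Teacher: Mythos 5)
Your proof is correct and follows essentially the same route as the paper: both directions are routine unravelings of the two definitions against the characterization of ac-labelings in Lemma~\ref{lem:aclabeling} (you invoke clause (3) where the paper leans on the equivalent clause (4), which makes no substantive difference). The only point worth adding is the one-line observation that condition (1) of a temporal assignment is what makes $\bar{v}(p)$ land in $\prod_{t\in T}\textbf{\L}_{L(t)}$ in the first place, so that $\bar{v}$ is well defined before one asks whether its values are ac-labelings.
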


\begin{proof}
For (1), let $p\in\var$. Note that property (1) in the definition of temporal assignments gives that $\bar{v}(p)\in\prod_{t\in T} \textbf{\L}_{L(t)}$, so that the definition of $\bar{v}$ makes sense. It is enough to show that $\bar{v}(p)$ is an ac-labeling. Observe that item (3) in the definition of temporal assignments shows that $\{t\in T : \bar{v}(t)\notin\{0,1\}\}$ is an antichain. On the other hand, item (2) entails that the $\bar{v}^{-1}(0)$ is a down-set and $\bar{v}^{-1}(1)$ is an up-set. The result then follows from Lemma~\ref{lem:aclabeling}(4).

For (2), we verify the three defining conditions for being a temporal assignment. Note that condition (1) is immediate from the fact that $h(p)\in\prod_{x\in X} {\m A}_x$ and ${\m A}_x$ is $\textbf{\L}_{L(x)}$ by definition. For item (2), note that if $x,y\in X$ with $x<y$, then either $\bar{h}(p,x)=0$ or $\bar{h}(p,y)=1$. We have $\bar{h}(p,x)\leq\bar{h}(p,y)$ in either of these cases, and (2) follows because $\bar{h}(p,x)\leq\bar{h}(p,y)$ also obviously holds if $x=y$. Item (3) is immediate from Lemma~\ref{lem:aclabeling}(4) since $\{x\in X : h(p)(x)\not\in\{0,1\}\}$ is an antichain, so we obtain the result.
\end{proof}
In light of the foregoing remarks, temporal flows are tantamount to $\{\textbf{\L}_k : k\in\mathbb{N}\}$-valued frames and temporal assignments are tantamount to valuations in $\{\textbf{\L}_k : k\in\mathbb{N}\}$-valued frames.

In H\'ajek's basic logic, all of the connectives are definable in terms of $\cdot$, $\to$, and $0$. Consequently, \cite{ABM2009} only extends temporal assignments to formulas in this signature. This is done by setting $v(0,t)=0$, $v(\varphi\cdot\psi,t)=v(\varphi,t)\cdot v(\psi,t)$, and 
\[ v(\varphi\to\psi,t) = \begin{cases}
      1 & \text{if } v(\varphi,t')\leq v(\psi,t')\text{ for all }t'\geq t,\\
      v(\varphi,t)\to v(\psi,t) & \text{if }v(\psi,t)<v(\varphi,t)<1\text{ and }\\
                                                 & v(\psi,t')=1\text{ for all } t'>t,\\
      v(\psi,t) & \text{otherwise,}
   \end{cases}
\]
where on the right $\to$ denotes the implication in $\mv$. In \cite{ABM2009}, a formula $\varphi$ is said to be valid in a temporal flow $(T,\leq,L)$ provided that $v(\varphi,t)=1$ for all temporal assignments $v$ and all $t\in T$. The next proposition shows that this notion of validity agrees with ours.

\begin{proposition}
Let $(X,\leq,\{{\m A}_x : x\in X\})$ be a $\{\textbf{\L}_k : k\in\mathbb{N}\}$-valued frame, let $h$ be an assignment in this frame. Further let $(X,\leq,L)$ be the corresponding temporal flow and $\bar{h}\colon\var\times X\to [0,1]$ be the corresponding temporal assignment as above. Let $\varphi$ be a formula (i.e., term) in the language $\{\cdot,\to,0\}$. Then:
\begin{enumerate}[\normalfont (1)]
\item $\bar{h}(\varphi,x)=\hat{h}(\varphi)(x)$ for all $x\in X$, i.e., the extension of $h$ as a temporal assignment agrees with its extension as a valuation.
\item $\varphi$ is valid in the frame $(X,\leq,\{{\m A}_x : x\in X\})$ if and only if $\varphi$ is valid in the temporal flow $(X,\leq,L)$.
\end{enumerate}
\end{proposition}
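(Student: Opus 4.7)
The plan is to prove (1) by structural induction on the formula $\varphi$ in the signature $\{\cdot,\to,0\}$; once (1) is established, part (2) will fall out by unwinding the definitions of validity.

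For the induction, the base cases $\varphi = 0$ and $\varphi = p \in \var$ are immediate from the definitions: $\bar{h}(0,x) = 0 = \hat{h}(0)(x)$ and $\bar{h}(p,x) = h(p)(x) = \hat{h}(p)(x)$. The conjunction/product case $\varphi = \psi_1 \cdot \psi_2$ is almost as easy: the temporal definition sets $v(\psi_1\cdot\psi_2, x) = v(\psi_1,x)\cdot v(\psi_2,x)$, while multiplication in the poset product is computed pointwise by Lemma~\ref{lem:product subalgebra}, so the two agree by the inductive hypothesis.

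The nontrivial case is $\varphi = \psi_1\to\psi_2$. Writing $f = \hat{h}(\psi_1)$ and $g = \hat{h}(\psi_2)$, the definition of the implication in the poset product gives
\[
\hat{h}(\psi_1\to\psi_2)(x) \;=\; \Box(f\to g)(x) \;=\; \begin{cases} f(x)\to g(x) & \text{if }f(y)\leq g(y)\text{ for all }y>x,\\ 0 & \text{otherwise,}\end{cases}
\]
where the pointwise implication on each factor is the one in $\textbf{\L}_{L(y)}$. My plan is to match this, case by case, against the three-clause temporal definition, using the inductive hypothesis $f(t) = v(\psi_1,t)$ and $g(t) = v(\psi_2,t)$. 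There are essentially two regimes: the ``$\Box$-condition holds'' regime (where $f(y)\leq g(y)$ for every $y>x$) and the ``$\Box$-condition fails'' regime. In the first regime, I will split further on whether $f(x)\leq g(x)$ (matching temporal clause T1, both sides give $1$), whether $g(x)<f(x)<1$ and the trailing condition on $g$ holds (matching T2, both sides give $f(x)\to g(x)$), or whether $f(x) = 1$ (falling into T3, but then $f(x)\to g(x) = g(x)$ as well). In the second regime, the RHS is $0$, and T1 and T2 both fail, so the LHS equals $g(x)$; I must then show $g(x) = 0$.

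The main obstacle, and where all the structure is used, is the final step: showing that in situations where the temporal definition falls through to its third clause $v(\psi_2,x)$ while the $\Box$-formula evaluates to $0$, one actually has $g(x) = 0$. This is where the ac-labeling property (Lemma~\ref{lem:aclabeling}) does the work: if there is some $y>x$ with $g(y) < 1$, then since $g$ is an ac-labeling one must have $g(x) = 0$; similarly for $f$. A parallel argument in the sub-case where $f(x) = 1$ also needs a short check that the two definitions coincide. Once these cases are sorted, induction yields (1). For part (2), note that $\varphi$ valid in the frame means $\hat{h}(\varphi)(x) = 1$ for every valuation $h$ and every $x$, while $\varphi$ valid in the temporal flow means $\bar{h}(\varphi,x) = 1$ for every temporal assignment $\bar{h}$ and every $x$; by Proposition~\ref{prop:temporal assign and val} the assignments and valuations are in bijection, and by (1) the two truth-values coincide, so the two notions of validity are equivalent.
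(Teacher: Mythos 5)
Your proposal is correct and follows essentially the same route as the paper: structural induction with the only substantive work in the $\to$ case, matching the $\Box$-clause against the three temporal clauses and invoking the ac-labeling property (Lemma~\ref{lem:aclabeling}) to show $g(x)=0$ in the fall-through situations. The paper organizes the case split by which temporal clause fires rather than by whether the $\Box$-condition holds, but the content is identical; just make sure to note explicitly that in your first regime the sub-case ``$g(x)<f(x)<1$ but the trailing condition fails'' is vacuous, since $f(x)\in(0,1)$ forces $f(y)=1$ and hence $g(y)=1$ for all $y>x$.
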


\begin{proof}
The proof of (1) is a straightforward induction on the complexity of $\varphi$. The base case is trivial, so suppose that for all formulas $\psi$ with complexity strictly less than that of $\varphi$, $\hat{h}(\psi)(x)=\bar{h}(\psi,x)$ for all $x\in X$. If $\varphi$ is $\psi_1\cdot\psi_2$, then $\hat{h}(\varphi)(x)=\bar{h}(\varphi,x)$ is immediate from the definitions. Now suppose that $\varphi$ is $\psi_1\to\psi_2$. We must show $\Box(\hat{h}(\psi_1)\to\hat{h}(\psi_2))(x)=\bar{h}(\psi_1\to\psi_2,x)$. The inductive hypothesis gives $\hat{h}(\psi_1)(x)=\bar{h}(\psi_1,x)$ and $\hat{h}(\psi_2)(x)=\bar{h}(\psi_2,x)$ for all $x\in X$. There are three cases, according to the clause used in the piecewise definition of the extension of $\to$ in temporal assignments.

First, if $\bar{h}(\psi_1,y)\leq \bar{h}(\psi_2,y)$ for all $y\geq x$, then $\bar{h}(\psi_1\to\psi_2,x)=1$. The inductive hypothesis gives $\hat{h}(\psi_1)(y)\leq \hat{h}(\psi_2)(y)$ for all $y\geq x$, and by residuation we have $(\hat{h}(\psi_1)\to\hat{h}(\psi_2))(y)=1$. The definition of $\Box$ then gives $\Box(\hat{h}(\psi_1)\to\hat{h}(\psi_2))(x)=\hat{h}(\psi_1)(x)\to\hat{h}(\psi_2)(x)=1$.

Second, if $\bar{h}(\psi_2,x)<\bar{h}(\psi_1,x)<1$ and $\bar{h}(\psi_2,y)=1$ for all $y>x$, then $\bar{h}(\psi_1\to\psi_2,x)=\bar{h}(\psi_1,x)\to\bar{h}(\psi_2,x)$. Moreover, for all $y>x$ we have $\hat{h}(\psi_1)(y)\to\hat{h}(\psi_2)(y)=\hat{h}(\psi_1)(y)\to 1 = 1$, so $\Box(\hat{h}(\psi_1)\to\hat{h}(\psi_2))(x)=\hat{h}(\psi_1)(x)\to\hat{h}(\psi_2)(x)=\bar{h}(\psi_1,x)\to\bar{h}(\psi_2,x)$.

Thirdly and finally, if neither of the previous cases hold there must exist $y\geq x$ such that $\bar{h}(\psi_1,y)\not\leq\bar{h}(\psi_2,y)$, i.e., $\bar{h}(\psi_2,y)<\bar{h}(\psi_1,y)$. We first consider the subcases where $x=y$. Then because we assume we are not in the second case, we have that $\bar{h}(\psi_1,x)=1$ or else there exists $z>x$ such that $\bar{h}(\psi_2,z)<1$. In the subcase that $\bar{h}(\psi_1,x)=1$, we have that $\hat{h}(\psi_1)(y)=1$ for all $y\geq x$ and $\hat{h}(\psi_1)(y)\to\hat{h}(\psi_2)(y)=\hat{h}(\psi_2)(y)$ for all $y\geq x$. Then $\Box(\hat{h}(\psi_1)\to\hat{h}(\psi_2))(x) = \hat{h}(\psi_2)(x) = \bar{h}(\psi_2,x)$. On the other hand, in the subcase where $\bar{h}(\psi_2,x)<\bar{h}(\psi_1,x)<1$ and there exists $z>x$ such that $\bar{h}(\psi_2,z)<1$, we have that $\hat{h}(\psi_1)(z)\to\hat{h}(\psi_2)(z)=1\to\hat{h}(\psi_2)(z)=\hat{h}(\psi_2)(z)\neq 1$, so $\Box (\hat{h}(\psi_1)\to\hat{h}(\psi_2))=0=\bar{h}(\psi_2,x)$. In the only remaining case, we have $x<y$. Then $\hat{h}(\psi_2)(x)=0$ and $\hat{h}(\psi_1)(y)\to\hat{h}(\psi_2)(y)\neq 1$, so $\Box(\hat{h}(\psi_1)\to\hat{h}(\psi_2))(x)=0=\hat{h}(\psi_2)(x)$. It follows that $\Box(\hat{h}(\psi_1)\to\hat{h}(\psi_2))(x)=\bar{h}(\psi_1\to\psi_2,x)$ in all cases, which proves (1).

For (2), observe that $\varphi$ is valid in the frame $(X,\leq,\{{\m A}_x : x\in X\})$ if and only if $\hat{h}(\varphi)(x)=1$ for every valuation $h$ in the frame and every $x\in X$, and by (1) this is equivalent to $\bar{h}(\varphi,x)=1$ for every valuation $h$ in the frame and every $x\in X$. Proposition~\ref{prop:temporal assign and val} shows that this holds if and only if $\varphi$ is valid in the temporal flow $(X,\leq,L)$, giving the result.
\end{proof}

\section{Soundness and completeness}\label{sec:soundness and completeness}
We next establish several soundness and completeness results for the relational semantics introduced in Section~\ref{def:frame}.

\begin{definition}
Let $\sf V$ be a variety of residuated lattices and let $\sf K$ be a class of frames.
\begin{enumerate}[\normalfont (1)]
\item We say that ${\sf K}$ is \emph{sound} for $\sf V$ if for any term $\varphi$, we have that if $\varphi\eq 1$ is valid in {\sf V} then $\sf K\km\varphi$.
\item We say that ${\sf K}$ is \emph{complete} for $\sf V$ if for any term $\varphi$, we have that $\sf K\km\varphi$ implies that $\varphi\eq 1$ is valid in $\sf V$.
\end{enumerate}
\end{definition}

The following lemma provides a sufficient condition for a class of frames to be sound and complete for a variety $\sf V$ of residuated lattices. All of our subsequent soundness and completeness results are derived from this general condition.

\begin{lemma}\label{thm:general condition}
Let $\sf V$ be a variety of residuated lattices generated by a class $\sf S$ of algebras in $\sf V$, and let $\sf K$ be a class of frames.
\begin{enumerate}[\normalfont (1)]
\item Suppose that $P(F)\in\sf V$ for all $F\in\sf K$. Then $\sf K$ is sound for $\sf V$.
\item Suppose that for each ${\m A}\in {\sf S}$ there exists $F\in\sf K$ such that $\m A$ embeds in $P(F)$. Then $\sf K$ is complete for $\sf V$.
\end{enumerate}
Consequently, if $\sf K$ satisfies the hypotheses of (1) and (2), then $\sf K$ is sound and complete for $\sf V$.
\end{lemma}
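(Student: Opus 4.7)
My plan is to derive both parts directly from Lemma~\ref{lem:forcing and algebra} together with standard facts about varieties, using the bridge that this lemma provides between algebraic validity in $P(F)$ and the frame semantics $\km$ on $F$.

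For part (1), I would argue as follows. Suppose $\varphi\eq 1$ is valid in $\sf V$ and let $F\in\sf K$ be arbitrary. By hypothesis $P(F)\in\sf V$, so $\varphi\eq 1$ is valid in $P(F)$ in particular. Lemma~\ref{lem:forcing and algebra} then gives $F\km\varphi$. Since $F\in\sf K$ was arbitrary, $\sf K\km\varphi$, proving soundness.

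For part (2), I would proceed contrapositively in spirit but phrase it directly. Suppose $\sf K\km\varphi$; I need to show $\varphi\eq 1$ is valid in $\sf V$. Since $\sf V$ is generated by $\sf S$ as a variety, $\sf V=\mathbb{HSP}(\sf S)$, and as equations are preserved by taking homomorphic images, subalgebras, and direct products, it suffices to show that $\varphi\eq 1$ is valid in every ${\m A}\in\sf S$. Fix such an $\m A$. By hypothesis there exists $F\in\sf K$ with an embedding ${\m A}\inj P(F)$. Since $\sf K\km\varphi$, we have $F\km\varphi$ in particular, and Lemma~\ref{lem:forcing and algebra} yields that $\varphi\eq 1$ is valid in $P(F)$. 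Validity of equations is inherited by subalgebras, so $\varphi\eq 1$ is valid in $\m A$. This gives completeness.

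The final sentence is immediate: under both hypotheses simultaneously, (1) and (2) combine to show that $\varphi\eq 1$ is valid in $\sf V$ if and only if $\sf K\km\varphi$. There is no real obstacle here; the only subtlety worth flagging is the appeal to the preservation of equational validity under $\mathbb{HSP}$, which is the content of Birkhoff's HSP theorem and is automatic for $\sf V$ being the variety generated by $\sf S$. Everything else reduces to a clean unwinding of definitions through Lemma~\ref{lem:forcing and algebra}.
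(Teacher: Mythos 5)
Your proposal is correct and follows essentially the same route as the paper: part (1) is verbatim the paper's argument via Lemma~\ref{lem:forcing and algebra}, and part (2) is just the direct (rather than contrapositive) phrasing of the paper's argument, resting on the same two facts — that the equational theory of $\sf V=\mathbb{HSP}(\sf S)$ coincides with that of $\sf S$, and that validity of equations is inherited by subalgebras.
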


\begin{proof}
(1) Suppose that $\varphi\eq 1$ is valid in $\sf V$, and let $F\in\sf K$. Since $\varphi\eq 1$ is valid in $\sf V$, we have in particular that $\varphi\eq 1$ is valid in $P(F)\in\sf V$. By Lemma \ref{lem:forcing and algebra}, it follows that $F\km\varphi$. Hence $\sf K\km\varphi$, and $\sf K$ is sound for $\sf V$.

(2) Suppose that $\varphi\eq 1$ is not valid in $\sf V$. Then there exists ${\m A}\in\sf S$ such that $\varphi\eq 1$ is refuted in $\m A$. By hypothesis, there exists $F\in\sf K$ such that $\m A$ embeds in $P(F)$. Since the validity of equations is preserved by taking subalgebras, it follows that $\varphi\eq 1$ is refuted in $P(F)$. Hence there exists an algebraic assignment $h\colon\var\to P(F)$ such that $h(\varphi)\neq 1$. The map $h$ is a valuation in $F$, and $(F,h)\not\km\varphi$ by construction. This shows that $\sf K\not\km\varphi$, so by taking the contrapositive we get that $\sf K$ is complete for $\sf V$.
\end{proof}

The following theorem illustrates some sample applications of the foregoing lemma; the list is far from exhaustive.

\begin{theorem}\label{thm:zoo}
\begin{enumerate}[\normalfont (1)]
\item Each of the following classes of frames is sound and complete for ${\sf GBL}$.
\begin{enumerate}
\item The class of $\mv$-valued frames.
\item The class of frames valued in finite simple MV-algebra chains.
\end{enumerate}
\item Each of the following classes of frames is sound and complete for $\sf BL$.
\begin{enumerate}
\item The class of $\mv$-valued frames based in root systems.
\item The class of $\{\textbf{\L}_k : k\geq 2\}$-valued frames based in root systems.
\item The class of $\{\textbf{\L}_k : k\geq 2\}$-valued frames based in finite chains.
\end{enumerate}
\item For each $k\geq 2$, the class of frames valued in finite simple $k$-potent MV-algebra chains is sound and complete for ${\sf GBL}^k$.
\item For each $k\geq 2$, the class of frames valued in finite simple $k$-potent MV-algebra chains and based in root systems is sound and complete for ${\sf BL}^k$.
\item The class of $\bf 2$-valued frames is sound and complete for $\sf HA$.
\item The class of $\bf 2$-valued frames based in root systems is sound and complete for $\sf GA$.
\end{enumerate}
\end{theorem}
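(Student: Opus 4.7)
The plan is to reduce each of the six items to an application of Lemma~\ref{thm:general condition}, pairing the closure results of Lemma~\ref{lem:closure under poset prod} on the soundness side with the embedding theorems Lemmas~\ref{lem:embed almost finite} and~\ref{lem:finite rep}, supplemented by Lemma~\ref{lem:subalgebras of poset products}, on the completeness side. The soundness halves are immediate invocations of Lemma~\ref{lem:closure under poset prod}: (1a)--(1b) use item (2); (2a)--(2b) use item (5) and (2c) uses item (8); (3) combines items (1) and (3); (4) combines items (3) and (5); (5) is item (4); and (6) is item (6).

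For completeness, (3) is handled directly by taking the generating subclass ${\sf S}$ to be all of ${\sf GBL}^k$: Lemma~\ref{lem:embed almost finite} embeds every $k$-potent GBL-algebra into a poset product of finite simple $k$-potent MV-chains. Items (5) and (6) are the $k = 1$ specializations of (3) and (4), since the only finite simple $1$-potent MV-chain is $\m 2 = {\textbf{\L}}_2$. For (4) we again apply Lemma~\ref{lem:embed almost finite}, after verifying that the indexing poset $(\Delta({\m A}), \subseteq)$ is a root system whenever ${\m A}$ is a BL-algebra. For (1) and (2) we invoke the FMP of ${\sf GBL}$ and ${\sf BL}$ (recalled in Section~\ref{sec:algebraic preliminaries}) to restrict ${\sf S}$ to the finite members, then apply Lemma~\ref{lem:finite rep} to represent each such algebra as a poset product of finite simple MV-chains over the finite poset of its values. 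For BL-algebras this poset is a root system; for a finite BL-chain it is itself a chain (needed for (2c)). Finally, Lemma~\ref{lem:subalgebras of poset products} lets us inflate finite simple MV-chain factors to copies of $\mv$, which handles (1a) and (2a).

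The main delicate point is verifying that $(\Delta({\m A}), \subseteq)$ is a root system whenever ${\m A}$ is a BL-algebra. For $x \in \Delta({\m A})$, the values of ${\m A}$ containing $x$ correspond bijectively to values of the quotient ${\m A}/x$, which is subdirectly irreducible and a BL-algebra, hence totally ordered by the characterization recalled at the end of Section~\ref{sec:residuated lattices}. In a chain, deductive filters are linearly ordered by inclusion, so the values of ${\m A}/x$ are linearly ordered; pulling back, $\up x$ in $(\Delta({\m A}), \subseteq)$ is a chain, which is exactly the root-system condition. If ${\m A}$ is itself a BL-chain, the same argument applied directly to ${\m A}$ shows that $(\Delta({\m A}), \subseteq)$ is a chain. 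Everything else reduces to bookkeeping on generating classes and repackaging within Lemma~\ref{thm:general condition}.
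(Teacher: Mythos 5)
Your proof is correct and follows essentially the same route as the paper: soundness via Lemma~\ref{thm:general condition}(1) paired with the closure results of Lemma~\ref{lem:closure under poset prod}, and completeness via Lemma~\ref{thm:general condition}(2) paired with Lemmas~\ref{lem:embed almost finite}, \ref{lem:finite rep}, and~\ref{lem:subalgebras of poset products} applied to suitable generating classes (almost finite or finite GBL-algebras, finite BL-chains, ${\sf GBL}^k$, ${\sf BL}^k$). Your only departures are cosmetic --- e.g.\ using Lemma~\ref{lem:finite rep} where the paper uses Lemma~\ref{lem:embed almost finite} for item (1), and citing Lemma~\ref{lem:closure under poset prod}(8) for 2(c) --- and your explicit verification that $(\Delta({\m A}),\subseteq)$ is a root system for a BL-algebra ${\m A}$ supplies a detail the paper merely asserts.
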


\begin{proof}
We first prove the soundness claims. In each case, we apply Lemma \ref{thm:general condition}(1) together with a result stating that $P(F)$ lies in the specified variety for each frame $F$ in specified class. For 1(a) and 1(b), the result follows from Lemma~\ref{lem:closure under poset prod}(2); for 2(a), 2(b), and 2(c) it follows from Lemma~\ref{lem:closure under poset prod}(5); item (3) follows from Lemma~\ref{lem:closure under poset prod}(2) and Lemma~\ref{lem:closure under poset prod}(3); item (4) follows from Lemma~\ref{lem:closure under poset prod}(3) and Lemma~\ref{lem:closure under poset prod}(5); item (5) follows from Lemma~\ref{lem:closure under poset prod}(4); and item (6) follows from Lemma~\ref{lem:closure under poset prod}(6). This proves all of the claimed soundness results.

For the completeness claims, in each case we give a generating class $\sf S$ for the specified variety such that each ${\m A}\in\sf S$ embeds in $P(F)$ for some $F$ in the specific class of frames. The completeness result then follows from Lemma~\ref{thm:general condition}(2) in each case. For (1), take ${\sf S}$ to be the class of almost finite GBL-algebras. Each ${\m A}\in\sf S$ embeds in a poset product of finite simple MV-algebra chains by Lemma~\ref{lem:embed almost finite}, giving 1(b). 1(a) then follows from Lemma~\ref{lem:subalgebras of poset products} since each finite simple MV-algebra chain $\textbf{\L}_k$ is a subalgebra of $\mv$.

For (2), take ${\sf S}$ to be the class of finite BL-algebra chains. This class generates ${\sf BL}$, and for each finite BL-algebra chain ${\m A}$ we have that $\Delta({\m A})$ is a chain. Hence Lemma~\ref{lem:embed almost finite} gives 2(c), and \emph{a fortiori} 2(b). 2(a) follows from Lemma~\ref{lem:subalgebras of poset products} as well.

(3) follows immediately from Lemma~\ref{lem:embed almost finite} by taking $\sf S$ to be ${\sf GBL}^k$. (4) follows likewise by taking ${\sf S}$ to be ${\sf BL}^k$, and noting that the poset of values of any $k$-potent BL-algebra is a root system. (5) and (6) follow by similar considerations by noting that the minimal nontrivial deductive filter of any subdirectly irreducible Heyting algebra is $\m 2$.
\end{proof}
In light of the discussion in Section~\ref{sec:comparison}, Theorem~\ref{thm:zoo}(1)(a) is the main result of \cite{LOR2020} (therein Theorems 3.13 and 3.14) and Theorem~\ref{thm:zoo}(2) is the main result of \cite{ABM2009} (therein Theorem 1.1). \cite{LOR2020} notes that its results specialize to Kripke semantics for intuitionistic logic (Theorem 3.10 therein, Theorem~\ref{thm:zoo}(5) here), while \cite{ABM2009} specializes its results to G\"odel-Dummett logic (Proposition 3.3 therein, Theorem~\ref{thm:zoo}(6) here). Both our general recipe for obtaining results of this kind and the results for ${\sf GBL}^k$ and ${\sf BL}^k$ (Theorem~\ref{thm:zoo}(3,4)) are new.

\section{Logics characterized by frames}\label{sec:afmp}

It is well-known that some extensions of intuitionistic logic are not characterized by any class of {\bf 2}-valued frames, a thoroughly-studied phenomenon usually called \emph{Kripke incompleteness}. Admitting frames valued in richer classes of residuated lattices considerably expands the expressive power of the relational semantics, but strips away some of its simplicity. The goal of this section is to offer a preliminary study of the expressivity of frames valued in finite MV-algebra chains. In particular, we provide some sufficient conditions for an axiomatic extension of the logic of GBL-algebras to be characterized by frames valued in $\{\textbf{\L}_k : k\geq 2\}$. Our first result concerns logics with the finite model property.

\begin{proposition}\label{thm:fmp}
Let $\sf V$ be a subvariety of $\sf GBL$ with the finite model property. Then there exists a class of $\{\textbf{\L}_k : k\geq 2\}$-valued frames that is sound and complete for $\sf V$.
\end{proposition}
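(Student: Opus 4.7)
The plan is to apply Lemma~\ref{thm:general condition} with a cleverly chosen class of frames. Specifically, define
$$
{\sf K} = \{F : F\text{ is a }\{\textbf{\L}_k : k\geq 2\}\text{-valued frame with }P(F)\in {\sf V}\}.
$$
Soundness of ${\sf K}$ for ${\sf V}$ is then built into the definition and follows immediately from Lemma~\ref{thm:general condition}(1).

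For completeness, I would apply Lemma~\ref{thm:general condition}(2) using the generating class ${\sf S}$ consisting of the finite members of ${\sf V}$, which generates ${\sf V}$ by the FMP hypothesis. Given ${\m A}\in {\sf S}$, invoke Lemma~\ref{lem:finite rep} to obtain finite simple MV-algebras ${\m A}_x$, $x\in\Delta({\m A})$, such that
$$
{\m A}\cong\prod_{(\Delta({\m A}),\subseteq)} {\m A}_x.
$$
Since every finite simple MV-algebra is (isomorphic to) some $\textbf{\L}_k$ with $k\geq 2$ (by the structure theory of MV-algebras referenced via \cite[Corollary 3.5.4]{CDM2000}), the frame $F_{\m A} = (\Delta({\m A}),\subseteq,\{{\m A}_x : x\in\Delta({\m A})\})$ is $\{\textbf{\L}_k : k\geq 2\}$-valued. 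By construction $P(F_{\m A})\cong {\m A}\in {\sf V}$, so $F_{\m A}\in {\sf K}$, and ${\m A}$ embeds (in fact, is isomorphic) into $P(F_{\m A})$. Lemma~\ref{thm:general condition}(2) then yields completeness of ${\sf K}$ for ${\sf V}$.

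I do not anticipate any significant obstacle: the proof is a direct assembly of tools already set up in the paper. The only subtle point to flag clearly is the identification of finite simple MV-algebras with the chains $\textbf{\L}_k$, and the fact that closure of ${\sf V}$ under isomorphism ensures $P(F_{\m A})\in {\sf V}$ so that $F_{\m A}\in {\sf K}$. Everything else is a book-keeping exercise that slots directly into the general recipe provided by Lemma~\ref{thm:general condition}.
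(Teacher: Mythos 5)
Your proof is correct and follows essentially the same route as the paper: the finite members of $\sf V$ as the generating class, Lemma~\ref{lem:finite rep} to realize each finite algebra as a poset product of chains $\textbf{\L}_k$, and Lemma~\ref{thm:general condition} for both directions. The only (harmless) difference is that you take $\sf K$ to be the maximal sound class of $\{\textbf{\L}_k : k\geq 2\}$-valued frames, whereas the paper uses just the frames $F({\m A})$ arising from the finite ${\m A}\in\sf V$; both choices work for exactly the reasons you give.
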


\begin{proof}
Let $\sf S$ be the class of finite member of $\sf V$, which by hypothesis generates $\sf V$. By Lemma~\ref{lem:finite rep} we have that for each ${\m A}\in{\sf S}$ there exists a family ${\m A}_x\in\{\textbf{\L}_k : k\geq 2\}$, $x\in\Delta=\Delta({\m A})$, such that
$${\m A}\cong\prod_{(\Delta,\subseteq)} {\m A}_x.$$
For each ${\m A}\in\sf S$, define $F({\m A})=(\Delta({\m A}), \subseteq, \{{\m A}_x : x\in\Delta({\m A})\})$ and ${\sf K} = \{F({\m A}) : {\m A}\in{\sf S}\}$. Then ${\m A}\cong P(F({\m A}))$, so in particular $P(F)\in\sf V$ for each $F\in{\sf K}$. Thus Lemma~\ref{thm:general condition}(1) gives that $\sf K$ is sound for $\sf V$. Since each ${\m A}\in\sf S$ embeds in $P(F({\m A}))$, Lemma~\ref{thm:general condition}(2) also gives that $\sf K$ is complete for $\sf V$.
\end{proof}

The situation is more complicated for varieties with the AFMP. Recall, if $\m A$ is an almost finite GBL-algebra, then $\m A$ embeds into the poset product $E({\m A})=\prod_{(\Delta,\subseteq)} {\m A}_x,$ where $\Delta=\Delta({\m A})$ is the set of values of $\m A$ and ${\m A}_x$ is the minimum nontrivial deductive filter of ${\m A}/x$ for each $x\in\Delta({\m A})$. Using this notation, for each almost finite GBL-algebra ${\m A}$ we define a frame
$$F({\m A}) = (\Delta({\m A}),\subseteq,\{{\m A}_x : x\in\Delta({\m A})\}),$$
and let $F({\sf V})=\{F({\m A}) : {\m A}\in{\sf V}\text{ is almost finite}\}$.

\begin{proposition}\label{prop:completeness almost finite}
Let $\sf V$ be a subvariety of $\sf GBL$ with the almost finite model property. Then $F({\sf V})$ is complete for $\sf V$.
\end{proposition}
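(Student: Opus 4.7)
The plan is to reduce the statement to a direct application of Lemma~\ref{thm:general condition}(2), using the embedding of almost finite GBL-algebras into their associated poset products guaranteed by Lemma~\ref{lem:embed almost finite}. The almost finite model property is exactly the hypothesis that lets us generate $\sf V$ from the class of algebras to which Lemma~\ref{lem:embed almost finite} applies, so no additional algebraic ingredients are needed.

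Concretely, I would let $\sf S$ be the class of almost finite members of $\sf V$. By the AFMP, $\sf S$ generates $\sf V$, so $\sf S$ meets the generating-class hypothesis of Lemma~\ref{thm:general condition}(2). For each ${\m A}\in\sf S$, the frame $F({\m A})=(\Delta({\m A}),\subseteq,\{{\m A}_x:x\in\Delta({\m A})\})$ is by definition an element of $F({\sf V})$, and $P(F({\m A}))$ is exactly the poset product $E({\m A})$ appearing in Lemma~\ref{lem:embed almost finite}. That lemma supplies an embedding ${\m A}\inj P(F({\m A}))$ via $a\mapsto\varepsilon_a$, which is precisely the embedding hypothesis needed in Lemma~\ref{thm:general condition}(2). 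Invoking Lemma~\ref{thm:general condition}(2) with $\sf S$ and the class $F({\sf V})$ then yields that $F({\sf V})$ is complete for $\sf V$.

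There is no serious obstacle here; the work has already been done in Lemmas~\ref{thm:general condition} and~\ref{lem:embed almost finite}, and the proof is essentially a matter of packaging. The only subtle point worth remarking on—and the reason the proposition claims only completeness, unlike its analogue Proposition~\ref{thm:fmp} for the FMP—is that for an almost finite ${\m A}\in{\sf V}$ we only have an embedding ${\m A}\inj P(F({\m A}))$ rather than an isomorphism ${\m A}\cong P(F({\m A}))$, so we cannot conclude that $P(F)\in {\sf V}$ for every $F\in F({\sf V})$. Thus the soundness half of Lemma~\ref{thm:general condition} cannot be invoked in general, and soundness would require extra assumptions (for instance, closure of $\sf V$ under the relevant poset products). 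This asymmetry is worth flagging in the statement or a subsequent remark, but it does not affect the proof of the proposition itself.
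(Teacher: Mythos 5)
Your proof is correct and follows exactly the paper's own argument: take $\sf S$ to be the almost finite members of $\sf V$, note $P(F({\m A}))=E({\m A})$, invoke Lemma~\ref{lem:embed almost finite} for the embedding, and conclude via Lemma~\ref{thm:general condition}(2). Your closing remark about why soundness does not follow is also accurate and matches the paper's subsequent discussion.
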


\begin{proof}
By hypothesis, ${\sf V}$ is generated by its class of almost finite members. By Theorem~\ref{lem:embed almost finite}, if ${\m A}\in{\sf V}$ is almost finite, then ${\m A}$ embeds in $E({\m A})$. Observe that $E({\m A})=P(F({\m A}))$ by construction, so in particular ${\m A}$ embeds in $P(F({\m A}))$. The result then follows from Lemma~\ref{thm:general condition}(2).
\end{proof}

Soundness is a more difficult issue for varieties with the AFMP. To express our soundness result, we recall the substructural hierarchy of Ciabattoni, Galatos, and Terui \cite{CGT2008, CGT2012}. We give the variant of this defined in \cite{Fr2016}.
\begin{definition}
For $n\geq 0$, recursively define sets $\mathcal{P}_n$ and $\mathcal{N}_n$ of terms as follows:
\begin{enumerate}[\normalfont (P1)]
\item[(0)] $\mathcal{P}_0=\mathcal{N}_0=\var$.
\item[(P1)] $1$ and all terms of $\mathcal{N}_n$ belong to $\mathcal{P}_{n+1}$.
\item[(P2)] If $t,u\in\mathcal{P}_{n+1}$, then $t\join u,t\cdot u\in\mathcal{P}_{n+1}$.
\item[(N1)] $0$ and all terms of $\mathcal{P}_n$ belong to $\mathcal{N}_{n+1}$.
\item[(N2)] If $t,u\in\mathcal{N}_{n+1}$, then $t\meet u\in\mathcal{N}_{n+1}$.
\item[(N3)] If $t\in\mathcal{P}_{n+1}$ and $u\in\mathcal{N}_{n+1}$, then $t\to u\in\mathcal{N}_{n+1}$.
\end{enumerate}
Furthermore, let $\mathcal{P}_2^*$ be the smallest set of terms such that:
\begin{enumerate}[\normalfont ($P1^*$)]
\item[(P1)$^*$] $\mathcal{P}_2\subseteq \mathcal{P}_2^*$.
\item[(P2)$^*$] If $t,u\in\mathcal{P}_2^*$, then $t\meet u,t\join u, t\cdot u\in\mathcal{P}_2^*$.
\item[(P3)$^*$] If $u\in\mathcal{P}_2^*$ and $t\in\mathcal{P}_1$, then $t\to u\in\mathcal{P}_2^*$.
\end{enumerate}
Finally, define $\mathcal{N}_2^*$ as $\mathcal{N}_2$ is defined, but replacing (N3) by
\begin{enumerate}[\normalfont ($P1^*$)]
\item[(N3)$^*$] If $t\in\mathcal{P}_2^*$ and $u\in\mathcal{N}_2^*$, then $t\to u\in\mathcal{N}_2^*$.
\end{enumerate}
\end{definition}

Every term in the language of residuated lattices belongs to some $\mathcal{P}_n$ and $\mathcal{N}_n$. \cite{CGT2012} shows that every equation of the form $t\eq 1$, where $t\in\mathcal{N}_2$, can be expressed by structural rules in a corresponding sequent calculus, and Je\v{r}\'abek has shown that every variety of commutative residuated lattices is axiomatizable by equations of the form $t\eq 1$, where $t\in\mathcal{N}_3$. We will apply the following preservation result.

\begin{lemma}[{\cite[Theorem 5.9]{Fr2016}}]\label{lem:preservation under conuclei}
Let $\m A$ be a residuated lattice and let $\sigma$ be a conucleus on $\m A$. Suppose that $t\in\mathcal{P}_2^*$ and $u\in\mathcal{N}_2^*$. Then if $\m A$ satisfies $t\leq u$, so does ${\m A}_\sigma$.
\end{lemma}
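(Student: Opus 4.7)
The plan is to prove the preservation by simultaneous induction on the construction of terms in $\mathcal{P}_2^*$ and $\mathcal{N}_2^*$. Fix an algebraic assignment $h \colon \var \to A_\sigma$ and compare its two extensions: $\hat h$ into $\m A$ and $\hat h^\sigma$ into $\m A_\sigma$. These agree on $\cdot$ and $\vee$ but diverge at $1$, $\wedge$, and $\to$, which in $\m A_\sigma$ are realized as $\sigma(1)$, $\sigma(\cdot \wedge \cdot)$, and $\sigma(\cdot \to \cdot)$. The central goal is to establish two coupled invariants: (i) $\hat h^\sigma(t) \leq \hat h(t)$ for every $t \in \mathcal{P}_2^*$, and (ii) $\sigma(\hat h(u)) \leq \hat h^\sigma(u)$ for every $u \in \mathcal{N}_2^*$. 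Since the range of $\hat h^\sigma$ lies in $A_\sigma$, applying $\sigma$ to (i) automatically yields the sharper $\hat h^\sigma(t) \leq \sigma(\hat h(t))$.

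Assuming (i) and (ii), the conclusion is immediate: from $\hat h(t) \leq \hat h(u)$ in $\m A$ one obtains $\hat h^\sigma(t) \leq \sigma(\hat h(t)) \leq \sigma(\hat h(u)) \leq \hat h^\sigma(u)$ by monotonicity of $\sigma$. For the induction itself, the base cases are immediate from $h[\var] \subseteq A_\sigma$ and $\sigma(0) = 0$. The cases in which the outermost operation is $\vee$, $\cdot$, or $\wedge$ reduce to standard consequences of the conucleus axioms, namely $\sigma(a) \vee \sigma(b) \leq \sigma(a \vee b)$, $\sigma(a)\sigma(b) \leq \sigma(ab)$, $\sigma(x) \leq x$, and the identity $\sigma(a \wedge b) = \sigma(\sigma(a) \wedge \sigma(b))$ (obtained by applying $\sigma$ to both directions of $\sigma(a) \wedge \sigma(b) \leq a \wedge b$).

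The main obstacle is the two implication clauses, since the residuum is antitone in its first argument and a naive appeal to (i) pushes the inequality the wrong way. The crucial auxiliary identity is $\sigma(a \to b) \leq \sigma(\sigma(a) \to \sigma(b))$, a one-line consequence of $\sigma(x)\sigma(y) \leq \sigma(xy)$ together with residuation. For the $\mathcal{N}_2^*$ clause $t \to u$ with $t \in \mathcal{P}_2^*$ and $u \in \mathcal{N}_2^*$, set $w := \sigma(\hat h(t) \to \hat h(u)) \in A_\sigma$; invariant (i) and the closure of $A_\sigma$ under $\cdot$ yield $w \cdot \hat h^\sigma(t) \in A_\sigma$ with $w \cdot \hat h^\sigma(t) \leq \hat h(u)$, and applying $\sigma$ together with invariant (ii) gives $w \cdot \hat h^\sigma(t) \leq \hat h^\sigma(u)$, from which residuation in $\m A$ delivers $w \leq \hat h^\sigma(t) \to \hat h^\sigma(u)$, as required. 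For the $\mathcal{P}_2^*$ clause $t \to u$ with $t \in \mathcal{P}_1$ and $u \in \mathcal{P}_2^*$, the restriction to antecedents in $\mathcal{P}_1$ is indispensable: a short subsidiary induction on $\mathcal{P}_1$-terms shows $\hat h^\sigma(t) = \sigma(1) \cdot \hat h(t)$ (using that $\sigma(1)$ is idempotent, acts as an identity on $A_\sigma$, and that $\cdot$ distributes over $\vee$), and this identity is exactly what is needed to overcome the antitone step in that case. Once the two invariants are in hand, the remaining verifications amount to routine conucleus bookkeeping; the genuine difficulty is conceptual, namely identifying invariants simultaneously strong enough to close the induction and weak enough to survive each clause of the hierarchy.
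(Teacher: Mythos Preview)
The paper does not supply a proof of this lemma at all: it is stated with a citation to \cite[Theorem 5.9]{Fr2016} and then used as a black box. So there is no ``paper's own proof'' to compare against.

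Your proposal is correct and is essentially the natural argument for this kind of preservation result. The simultaneous induction with the two invariants (i) $\hat h^\sigma(t)\leq\hat h(t)$ for $t\in\mathcal{P}_2^*$ and (ii) $\sigma(\hat h(u))\leq\hat h^\sigma(u)$ for $u\in\mathcal{N}_2^*$ is exactly the right setup, and your identification of the $\mathcal{P}_1$-identity $\hat h^\sigma(t)=\sigma(1)\cdot\hat h(t)$ as the device that rescues the antitone step in clause (P3)$^*$ is the real content. One small point worth making explicit: in the $\mathcal{N}_2^*$ implication case you finish with ``residuation in $\m A$ delivers $w\leq\hat h^\sigma(t)\to\hat h^\sigma(u)$, as required,'' but what you actually need is $w\leq\sigma(\hat h^\sigma(t)\to\hat h^\sigma(u))=\hat h^\sigma(t)\to_\sigma\hat h^\sigma(u)$; this follows immediately by applying $\sigma$ and using $w\in A_\sigma$, but it is worth writing down. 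Similarly, for the base case of invariant (ii) you should record that for $s\in\mathcal{P}_1$ one has $\sigma(\hat h(s))=\sigma(1)\sigma(\hat h(s))\leq\sigma(1)\hat h(s)=\hat h^\sigma(s)$, since $\mathcal{P}_1$-terms are the atoms of $\mathcal{N}_2^*$ via (N1). With those two lines added, the argument is complete.
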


Call an equation \emph{conuclear} if it is of the form $t\to u\eq 1$, where $t\in\mathcal{P}_2^*$ and $u\in\mathcal{N}_2^*$.

\begin{theorem}
Suppose that $\sf V$ is a subvariety of $\sf GBL$ with the almost finite model property. Assume further that $\sf V$ is axiomatized relative to $\sf GBL$ by a set $\Sigma$ of equations in the signature $\{\meet,\join,\cdot,\to,1\}$, and that each equation in $\Sigma$ either (1) does not contain $\to$, or (2) is conuclear. Then $F({\sf V})$ is sound and complete for $\sf V$.
\end{theorem}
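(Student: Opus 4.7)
The plan is to invoke Lemma~\ref{thm:general condition}: completeness has already been established in Proposition~\ref{prop:completeness almost finite}, so the real work is soundness, which by Lemma~\ref{thm:general condition}(1) reduces to showing that $P(F) \in {\sf V}$ for every frame $F \in F({\sf V})$. Unpacking definitions, given an almost finite ${\m A} \in {\sf V}$, the associated poset product is $P(F({\m A})) = E({\m A}) = {\m B}_\Box$, where ${\m B} = \prod_{x \in \Delta({\m A})} {\m A}_x$ and each ${\m A}_x$ is the minimum nontrivial deductive filter of ${\m A}/x$. So the goal becomes: verify that $E({\m A})$ satisfies every equation in $\Sigma$ (and lies in $\sf GBL$).

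First I would observe that $E({\m A}) \in {\sf GBL}$ by Lemma~\ref{lem:closure under poset prod}(1), since each ${\m A}_x$ is a simple MV-algebra chain, hence a GBL-algebra. Next I would note that each ${\m A}_x$ belongs to ${\sf V}$: indeed, ${\m A}/x$ is a quotient of ${\m A} \in {\sf V}$, and ${\m A}_x$ is a subalgebra of ${\m A}/x$, so by closure of the variety ${\sf V}$ under homomorphic images and subalgebras we have ${\m A}_x \in {\sf V}$; closure under direct products then gives ${\m B} \in {\sf V}$. In particular, $\m B$ satisfies every equation in $\Sigma$.

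The remaining step is to transfer validity of each equation from $\m B$ to ${\m B}_\Box$, and this splits into the two cases provided by the hypothesis. For an equation in $\Sigma$ that does not contain $\to$, Lemma~\ref{lem:product subalgebra} tells us that ${\m B}_\Box$ is a $\{\meet,\join,\cdot,0,1\}$-subalgebra of $\m B$, so any $\to$-free identity holding in $\m B$ automatically holds in ${\m B}_\Box$. For a conuclear equation $t \to u \eq 1$ (with $t \in \mathcal{P}_2^*$ and $u \in \mathcal{N}_2^*$), residuation lets us rewrite it as the inequality $t \leq u$; since $\Box$ is a conucleus on $\m B$ and the inequality holds in $\m B$, Lemma~\ref{lem:preservation under conuclei} yields that the same inequality, and hence the original equation, holds in ${\m B}_\Box = E({\m A})$. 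Combining these cases, $E({\m A}) \in {\sf V}$, which completes the soundness argument and hence the theorem.

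The one place requiring care rather than routine computation is the correct bookkeeping of the two types of axioms: one must recognize that $\to$-free equations and inequalities of conuclear shape are precisely the two classes of identities that transfer automatically from a residuated lattice to a conuclear image, the former via the straightforward subalgebra observation and the latter via the nontrivial preservation result of Lemma~\ref{lem:preservation under conuclei}. Once this is in place, no further ingredient beyond the machinery already developed is needed, and the argument is essentially a one-paragraph invocation of earlier lemmas.
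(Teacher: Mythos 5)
Your proof is correct and follows essentially the same route as the paper's: completeness via Proposition~\ref{prop:completeness almost finite}, and soundness by pushing $\Sigma$ down to the factors ${\m A}_x$, up to the direct product $\m B$, and then across to ${\m B}_\Box$ using Lemma~\ref{lem:product subalgebra} for $\to$-free equations and Lemma~\ref{lem:preservation under conuclei} for conuclear ones. Your explicit check that $E({\m A})\in{\sf GBL}$ (needed since $\sf V$ is axiomatized only \emph{relative} to $\sf GBL$) is a detail the paper leaves implicit, and it is a welcome addition.
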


\begin{proof}
Proposition \ref{prop:completeness almost finite} gives completeness. For soundness, let ${\m A}$ be an almost finite member of ${\sf V}$. Then ${\m A}$ satisfies $\Sigma$. If $x$ is a value of ${\m A}$, then ${\m A}_x$ is a $\{\meet,\join,\cdot,\to,1\}$-subalgebra of the quotient ${\m A}/x$, hence ${\m A}_x$ satisfies $\Sigma$ for each $x\in\Delta=\Delta({\m A})$. It follows that the direct product ${\m B} = \prod_{x\in\Delta} {\m A}_x$ also satisfies $\Sigma$. By definition, $P(F({\m A}))$ is ${\m B}_{\Box}$. Each identity in $\Sigma$ is preserved under $\Box$ by Lemmas~\ref{lem:product subalgebra} and~\ref{lem:preservation under conuclei}, so it follows that $P(F({\m A}))$ satisfies $\Sigma$. Hence $P(F({\m A}))\in\sf V$ for each almost finite ${\m A}\in{\sf V}$, and thus it follows from Theorem~\ref{thm:general condition} that $F({\sf V})$ is sound for ${\sf V}$.
\end{proof}

%%%%%%  THE BIBLIOGRAPHY
%%%%%%  See the examples and format yours according to them

%%%%%%  AUTHOR'S ADDRESS INFORMATION AT THE END OF THE PAPER:

\AuthorAdressEmail{Wesley Fussner}{Laboratoire J.A Dieudonn\'e\\
Universit\'e C\^ote d'Azur \\
Parc Valrose 06108 Nice Cedex 2 \\
France}{wfussner@unice.fr}

\end{document}